\definecolor{darkblue}{rgb}{0.1, 0.2, 0.75}
\definecolor{darkgreen}{rgb}{0.1, 0.35, 0}
\definecolor{candypink}{rgb}{0.89, 0.44, 0.48}
\definecolor{deepcerise}{rgb}{0.85, 0.2, 0.53}
\definecolor{atomictangerine}{rgb}{1.0, 0.6, 0.4}
\definecolor{fandango}{rgb}{0.71, 0.2, 0.54}
\newtheorem{theorem}{Theorem}[section]
\newtheorem{Def}[theorem]{Definition}
\newtheorem{thm}[theorem]{Theorem}
\newtheorem{lemma}[theorem]{Lemma}
\theoremstyle{remark}
\newtheorem{remark}[theorem]{Remark}
\newtheorem{hyp}[theorem]{Hypothesis}
\numberwithin{equation}{section}
\def\RR{\mathbb{R}}
\def\NN{\mathbb{N}}
\def\mE{\mathbb{E}}
\newcommand{\DD}{\mathbb{D}}
\def\bfx{{\bf x}}
\newcommand{\ca}{{\mathcal A}}
\newcommand{\cf}{{\mathcal F}}
\newcommand{\ch}{{\mathcal H}}
\newcommand{\ci}{{\mathcal I}}
\newcommand{\cj}{{\mathcal J}}
\newcommand{\cm}{{\mathcal M}}
\newcommand{\cn}{{\mathcal N}}
\newcommand{\cp}{{\mathcal P}}
\newcommand{\cv}{{\mathcal V}}
\def\la{{\lambda}}
\def\si{\sigma}
\def\la{{\lambda}}
\def\al{{\alpha}}
\def\be{{\beta}}
 \newcommand{\ep}{\varepsilon}
\newcommand{\lp}{\left(}
\newcommand{\rp}{\right)}
\newcommand{\lc}{\left[}
\newcommand{\rc}{\right]}
\begin{document}
\title[Riemann-Skorohod integrals]
{Riemann-Skorohod and Stratonovich integrals  for Gaussian processes}
\date{}   

\author[Y. Liu]
{Yanghui Liu} 
\address{Y. Liu: Baruch College, CUNY, New York}
\email{yanghui.liu@baruch.cuny.edu}

\keywords{Riemann-Skorohod sum, Skorohod-type integral, rough path, Gaussian process, conversion formula}

    \begin{abstract}
In this paper we consider    Skorohod and Stratonovich-type integrals   in a general setting of Gaussian processes. 
We show that a conversion formula holds when  the covariance functions  of the Gaussian process are of finite $\rho$-variation for $\rho\geq 1$ and    that  the  diagonals  of   covariance functions   are of finite $\rho'$-variation for $\rho'\geq 1$ such that $\frac{1}{\rho'}+\frac{1}{2\rho}>1$. 
 The difference between the two types of integrals is identified  with  a Young integral.  
We also show that    the   Skorohod  integral is the limit of a $[\rho]$-th order Skorohod-Riemann sum.   
\end{abstract}

\maketitle


\section{Introduction}
  
  

  




Let $x$ be a continuous   semimartingale   and $f $  a   sufficiently smooth function.  
The classical It\^o-Stratonovich conversion  formula states that the difference between    Stratonovich integral $\int_{0}^{T}f(x_{t})  dx_{t}$ and      It\^o integral, denoted by  $\int_{0}^{T}f(x_{t})\delta x_{t}$\,,  is equal to the Riemann-Stieltjes  integral $\frac12 \int_{0}^{T}\Delta f(x_{t})d[x]_{t}$,  where $\Delta f  $ is the Laplacian of $f$ and $[x]$ is the quadratic variation process of $x$ (see e.g. \cite{karatzas1991brownian, revuz2013continuous}). Furthermore, if the integrand $f(x_{\cdot})$ is replaced by a continuous  semimartingale $y$ then the difference is identified with the covariation $\frac12 [y,x]_{T}$.

Interestingly,   both types of stochastic calculus have been extended to the setting of Gaussian processes (by  entirely different approaches). The theory of rough path  provides a path-wise approach for Gaussian processes (see e.g. \cite{FV10}), which gives rise to   Stratonovich-type integrals, while  the Skorohod integrals   based on Malliavin calculus  generalize the It\^o integrals (see e.g. \cite{hu2016analysis, N06}). 
This raises a natural question: Is there a conversion formula for Gaussian processes as well that   relates these two approaches? If so,   to what extent  does it hold?

This problem has been investigated in a number of recent works  (see  e.g.   \cite{alos2001stochastic, carmona2003stochastic, cass2019stratonovich,  cass2021skorohod, coutin2007introduction, hu2013stratonovich,  kruk2010malliavin, nualart2011multidimensional,   song2022skorohod}).  
The      results    are  mainly focused on   a  one-dimensional setting, or   assume that a geometric  Gaussian  rough path     exists.  The latter usually requires that the covariance function of the Gaussian process is of finite $\rho$-variation for $\rho<2$. 
These conditions are in place  to   properly define the Stratonovich-type integral. 
Another main obstacle for the conversion formula problem is that the Skorohod integral  $\int_{0}^{T}f(x_{t})\delta x_{t}$ does not exist when $x$ is, for instance, a fractional Brownian motion with Hurst parameter $H\leq 1/4$. This problem is resolved by defining a proper extension of the divergence operator (see e.g. \cite{cheridito2005stochastic}). In \cite{hu2013stratonovich}, the authors also studied this extension  based on Wick product.   
  
In this   paper, we consider the  conversion formula problem in a more general   setting of Gaussian processes. More precisely, we will assume the following:

\begin{hyp}\label{hyp.1}
Let $ x=(x_{1},\dots, x_{d} )$ be a $\RR^{d}$-valued mean-zero continuous  Gaussian process such that  the components $x_{i}$, $i=1,\dots, d$ are mutually independent  
and that 
 their  covariance functions $R_{i}(s,t):=\mE[x_{i}(s)x_{i}(t)]$, $i=1,\dots, d$ are continuous and of finite $\rho$-variation for $\rho\geq 1$.  
  Assume  further  that one of the following two conditions  is true: 

\noindent (1) $\rho<2$; 

\noindent (2)  $\rho \geq 2$, and  the one-variable function $R_{i}(t,t), t\in[0,T]$ is of finite $\rho'$-variation for  some $\rho'\geq 1$ such that $\frac{1}{\rho}+\frac{1}{\rho'}>1$.

\end{hyp}

 
  To make sense of a Stratonovich-type integral in this setting  we will focus on integrand  of the form $\partial f:=\sum_{i=1}^{d}\frac{\partial f}{\partial x_{i}}$ for $f : (t,x)\in[0,T]\times \RR^{d}\to \RR$. 
   Let $\cp:0=t_{0}<\cdots<t_{n}=T$ be a partition of $[0,T]$.
   Consider the following compensated Riemann sum: 
   \begin{eqnarray}
\cj(\partial f, dx, \cp):=
 \sum_{k=0}^{n-1} \sum_{m=1}^{[2\rho]} \frac{1}{m!} \partial^{m}f(t_{k},x_{t_{k}})\cdot (x_{ t_{k+1}}-x_{t_{k}})^{\otimes m} ,
\notag
\end{eqnarray}
where $\partial^{m}f$ is the collection of all $m$-th order partial derivatives of $f$ in the directions of $x_{i}$'s and the product between $ \partial^{m}f(t_{k},x_{t_{k}})$ and $ (x_{ t_{k+1}}-x_{t_{k}})^{\otimes m} $   should be interpreted as a dot product. 
Denote by $|\cp|$   the mesh size of $\cp$. 
    Then  our Stratonovich-type integral is defined as follows:
      \begin{eqnarray}
\int_{0}^{T} \partial f(t,x_{t})dx_{t}: = \lim_{|\cp|\to0} \cj(\partial f, dx, \cp) .  
\label{e.stra}
\end{eqnarray}
  One can show that  such  integral  agrees with the corresponding  rough integral   whenever both integrals are well-defined (see Remark \ref{remark.rough}).

Consider   now the following Skorohod-Riemann sum: 
  \begin{eqnarray}
\cj(\partial f,\delta x, \cp) := \sum_{k=0}^{n-1} \sum_{m=1}^{[\rho]} \frac{1}{m!}\delta^{m}\Big( \partial^{m} f(t_{k},x_{t_{k}}) \be_{k}^{\otimes m}
\Big)  , 
\label{e.crss}
\end{eqnarray}
where $\be_{k}$ is the indicate function $\be_{k} = \mathbf{1}_{[t_{k},t_{k+1}]}$, $\delta^{m}$ is the collection of all $m$-th order divergence operators for the Gaussian process $x$, and $\delta^{m}$ is applied to   $\partial^{m} f(t_{k},x_{t_{k}}) \be_{k}^{\otimes m}$ componentwisely (see Definition \ref{def.skor} for more details). 
Our 
 extended Skorohod integral is then defined by:
  \begin{eqnarray}
\int_{0}^{T}\partial f(t,x_{t})\delta x_{t} := \lim_{|\cp|\to0}   \cj(\partial f,\delta x, \cp). 
\label{e.skord}
\end{eqnarray}
The Skorohod-Riemann sum in \eqref{e.crss} is motivated by a limit theorem of compensated Riemann sum in \cite{liu2024limit}. 
In \cite{hu2013stratonovich} the authors considered the following  Wick-Riemann sum:
    \begin{eqnarray}
\sum_{k=0}^{n-1} \sum_{m=1}^{N} \frac{1}{m!} \partial^{m} f(t_{k},x_{t_{k}}) \diamond \delta^{m} (\be_{k}^{\otimes m}  
 )   , 
\label{e.wrs}
\end{eqnarray}
where $N$ is equal to  $[2\rho]$ in our notation.
 It turns out that the components in our   Skorohod-Riemann sum coincides with those in the Wick-Riemann sum \eqref{e.wrs}. More precisely, we have the relation (see Proposition 4.1 (3) in  \cite{hu2013stratonovich})
 \begin{eqnarray}
\delta^{m}\Big( \partial^{m} f(t_{k},x_{t_{k}}) \be_{k}^{\otimes m}
\Big)  =  \partial^{m} f(t_{k},x_{t_{k}}) \diamond \delta^{m} (\be_{k}^{\otimes m}  
 )   
\notag
\end{eqnarray}
for a general $f$.  We will see that our result improves \cite{hu2013stratonovich} in several nontrivial ways.

In the following is  our main result (see Theorem \ref{thm.main} for a more precise statement). 
For convenience, let us denote $\partial^{2}_{ii} = \frac{\partial^{2}}{\partial x_{i}^{2}}$ and the   Riemann sum: 
\begin{eqnarray}
\cj(\partial^{2}f, dR, \cp) := \sum_{k=0}^{n-1}  \sum_{i=1}^{d}  {\partial^{2}_{ii} f}  (t_{k},x_{t_{k}})\cdot \Big(R_{i}(t_{k+1},t_{k+1}) - R_{i}(t_{k},t_{k}) \Big).   
\label{e.fdr}
\end{eqnarray}

\begin{theorem}\label{thm.intro}
Let  $f $ be a function that is   sufficiently smooth, and  $ x $ be  a  Gaussian process  satisfying Hypothesis \ref{hyp.1}. 
Then the limit in  \eqref{e.stra} exists and we have the  relation:
\begin{eqnarray}
  \int_{0}^{T}\partial f(t,x_{t})d  x_{t}  =  \lim_{|\cp|\to0} \lp \cj(\partial f,\delta x, \cp) + \frac12 \cj(\partial^{2}f, dR, \cp) \rp.
\notag
\end{eqnarray}
Moreover, if   one of the following holds:

\noindent (a)  $\rho <3/2$\,,  
 
\noindent (b) $\frac{1}{2\rho}+\frac{1}{\rho'}>1$\,, 
 
\noindent then   both  $\cj(\partial f,\delta x, \cp) $ and $  \cj(\partial^{2}f, dR, \cp)$ converge, and we have    the        conversion formula: 
 \begin{eqnarray}
  \int_{0}^{T}\partial f(t,x_{t})d  x_{t}  = \int_{0}^{T} \partial f(t,x_{t})\delta x_{t} +\frac12 \sum_{i=1}^{d}\int_{0}^{T} {\partial^{2}_{ii} f}  (t,x_{t})dR_{i}(t,t) , 
\label{e.dxdelta11}
\end{eqnarray}
where    $\int_{0}^{T} {\partial^{2}_{ii} f}  (t,x_{t})dR_{i}(t,t)$ should be interpreted as a Young integral.  
\end{theorem}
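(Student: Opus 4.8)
The plan is to prove the theorem in four steps: an elementary argument that the limit \eqref{e.stra} exists; a partition-level conversion identity coming from Malliavin integration by parts; a vanishing estimate for the resulting remainder (the main obstacle); and, under (a) or (b), the identification of the diagonal correction with a Young integral. For Step 1, observe that for each $k$ the inner sum $\sum_{m=1}^{[2\rho]}\frac{1}{m!}\partial^{m}f(t_{k},x_{t_{k}})\cdot(x_{t_{k+1}}-x_{t_{k}})^{\otimes m}$ is exactly the order-$[2\rho]$ Taylor polynomial in the space variable of $f(t_{k},x_{t_{k+1}})-f(t_{k},x_{t_{k}})$, so
\begin{equation*}
\cj(\partial f,dx,\cp)=\big(f(T,x_{T})-f(0,x_{0})\big)-\sum_{k=0}^{n-1}\big(f(t_{k+1},x_{t_{k+1}})-f(t_{k},x_{t_{k+1}})\big)+\sum_{k=0}^{n-1}\mathrm{Rem}_{k},
\end{equation*}
with $|\mathrm{Rem}_{k}|\lesssim\|\partial^{[2\rho]+1}f\|_{\infty}\,|x_{t_{k+1}}-x_{t_{k}}|^{[2\rho]+1}$. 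Hypothesis \ref{hyp.1} gives $\mE|x_{i}(t)-x_{i}(s)|^{2}=R_{i}(t,t)-2R_{i}(s,t)+R_{i}(s,s)\lesssim\omega(s,t)^{1/\rho}$ for a control $\omega$, hence (Gaussian Kolmogorov criterion) $x$ has a.s.\ finite $p$-variation for every $p>2\rho$; choosing $p\in(2\rho,[2\rho]+1)$ forces $\sum_{k}|x_{t_{k+1}}-x_{t_{k}}|^{[2\rho]+1}\to0$, while the middle sum converges to $\int_{0}^{T}\partial_{t}f(t,x_{t})\,dt$. Thus $\cj(\partial f,dx,\cp)$ converges; write the limit as $\int_{0}^{T}\partial f(t,x_{t})\,dx_{t}$. (A routine localization reduces to $f$ with bounded derivatives.)

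For Step 2, fix $k$. I first expand each Gaussian power $(x_{t_{k+1}}-x_{t_{k}})^{\otimes m}$ through the Hermite relation between ordinary and Wick powers, so it becomes a combination of the Wick powers $\delta^{m-2\ell}(\be_{k}^{\otimes(m-2\ell)})$, $0\le\ell\le\lfloor m/2\rfloor$, weighted by $\ell$-fold products of the increment covariances; then I pull $\partial^{m}f(t_{k},x_{t_{k}})$ inside each divergence by iterating $\delta(Gh)=G\,\delta(h)-\langle DG,h\rangle_{\ch}$, using for the top term $\delta^{m}(\partial^{m}f\cdot\be_{k}^{\otimes m})=\partial^{m}f\diamond\delta^{m}(\be_{k}^{\otimes m})$ (Proposition 4.1(3) in \cite{hu2013stratonovich}). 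Collecting: the divergences of order $1\le m\le[\rho]$ assemble into $\cj(\partial f,\delta x,\cp)$; the first-order contraction from the $m=1$ term, $\sum_{i}\partial^{2}_{ii}f(t_{k},x_{t_{k}})\langle e^{i}_{t_{k}},\be_{k}\rangle_{\ch}$ with $\langle e^{i}_{t_{k}},\be_{k}\rangle_{\ch}=R_{i}(t_{k},t_{k+1})-R_{i}(t_{k},t_{k})$ ($e^{i}_{t}\in\ch$ being the representative of $x_{i}(t)$), together with the self-contraction from the $m=2$ term, $\tfrac12\sum_{i}\partial^{2}_{ii}f(t_{k},x_{t_{k}})\|\be_{k}\|_{\ch}^{2}$ with $\|\be_{k}\|_{\ch}^{2}=R_{i}(t_{k+1},t_{k+1})-2R_{i}(t_{k},t_{k+1})+R_{i}(t_{k},t_{k})$, combine to $\tfrac12\sum_{i}\partial^{2}_{ii}f(t_{k},x_{t_{k}})\big(R_{i}(t_{k+1},t_{k+1})-R_{i}(t_{k},t_{k})\big)$, hence summed over $k$ to $\tfrac12\cj(\partial^{2}f,dR,\cp)$; everything else — divergences of order $[\rho]<m\le[2\rho]$, the higher Hermite terms ($\ell\ge1$), and every contraction carrying a covariance factor together with a residual divergence, or at least two covariance factors — goes into a remainder $\mathcal{R}(\cp)$. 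This gives
\begin{equation*}
\cj(\partial f,dx,\cp)=\cj(\partial f,\delta x,\cp)+\tfrac12\cj(\partial^{2}f,dR,\cp)+\mathcal{R}(\cp).
\end{equation*}

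Step 3 is the main obstacle: showing $\|\mathcal{R}(\cp)\|_{L^{2}}\to0$ as $|\cp|\to0$ under Hypothesis \ref{hyp.1}. Each summand of $\mathcal{R}(\cp)$ belongs to finitely many Wiener chaoses, so by hypercontractivity it suffices to bound $L^{2}$-norms; expanding $\|\sum_{k}(\cdot)_{k}\|_{L^{2}}^{2}$ into diagonal and off-diagonal sums, one controls the covariance cross terms by the two-dimensional $\rho$-variation of $R_{i}$ — in particular the chaos-$a$ pieces decay because the double sums of $\rho$-th powers of the rectangular increments $\mE[(x_{i}(t_{k+1})-x_{i}(t_{k}))(x_{i}(t_{l+1})-x_{i}(t_{l}))]$ stay bounded, so the corresponding sums of $a$-th powers vanish once $a>\rho$ — and, when $\rho\ge2$, by the $\rho'$-variation of $R_{i}(t,t)$, used on the increment variances $\|\be_{k}\|_{\ch}^{2}$ via the splitting $\|\be_{k}\|_{\ch}^{2}=\big(R_{i}(t_{k+1},t_{k+1})-R_{i}(t_{k},t_{k})\big)-2\big(R_{i}(t_{k},t_{k+1})-R_{i}(t_{k},t_{k})\big)$ into a $\rho'$-controlled diagonal part and a $\rho$-controlled rectangular part. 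The delicate point is the exponent bookkeeping: term by term one verifies that each remainder contribution carries enough increments to have $L^{2}$-norm $O(|\cp|^{\theta})$ for some $\theta>0$ — the high-order divergences because $m\ge[\rho]+1>\rho$, the contraction terms because each covariance factor supplies decay of order $1/\rho$, or of order $1/\rho'$ for a diagonal factor when $\rho\ge2$, used together with the standing bound $\tfrac1\rho+\tfrac1{\rho'}>1$ — and this is exactly what Hypothesis \ref{hyp.1} provides. Combined with Steps 1–2 this yields the first assertion: $\cj(\partial f,\delta x,\cp)+\tfrac12\cj(\partial^{2}f,dR,\cp)\to\int_{0}^{T}\partial f(t,x_{t})\,dx_{t}$.

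For Step 4, assume (a) or (b). Then $x_{i}$ has a.s.\ finite $q$-variation for some $q$ slightly above $2\rho$ (Step 1), while $t\mapsto R_{i}(t,t)$ has finite $r$-variation with $r=\rho$ under (a) (the diagonal of a two-dimensional function of finite $\rho$-variation) and $r=\rho'$ under (b); in either case $\tfrac1q+\tfrac1r>1$ — under (a) because $\tfrac1{2\rho}+\tfrac1\rho>1\iff\rho<3/2$, under (b) because $\tfrac1{2\rho}+\tfrac1{\rho'}>1$ by hypothesis. Hence $t\mapsto\partial^{2}_{ii}f(t,x_{t})$ has finite $q$-variation, the Young integral $\int_{0}^{T}\partial^{2}_{ii}f(t,x_{t})\,dR_{i}(t,t)$ is well-defined, and the Young Riemann-sum estimate gives $\cj(\partial^{2}f,dR,\cp)\to\sum_{i=1}^{d}\int_{0}^{T}\partial^{2}_{ii}f(t,x_{t})\,dR_{i}(t,t)$. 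Rewriting $\cj(\partial f,\delta x,\cp)=\cj(\partial f,dx,\cp)-\tfrac12\cj(\partial^{2}f,dR,\cp)-\mathcal{R}(\cp)$ and using Steps 1–3, the right-hand side converges, so the extended Skorohod integral \eqref{e.skord} exists and the conversion formula \eqref{e.dxdelta11} holds.
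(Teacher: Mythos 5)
Your Steps 1, 2 and 4 are sound and in fact mirror the paper's own route (Taylor expansion for the existence of the Stratonovich limit, Hermite/Wick expansion of the increments plus pulling the coefficients inside the divergences, Young's theorem for the $dR$ term). The genuine gap is in Step 3: the claim that the remainder $\mathcal{R}(\cp)$ vanishes \emph{term by term} is false in the regime $\rho\geq 2$, which Hypothesis \ref{hyp.1} explicitly allows. Concretely, among the ``higher Hermite terms'' you discard is the pure trace term coming from $m=4$, namely $\frac18\sum_{k}\partial^{4}f(t_{k},x_{t_{k}})\,\si_{k}^{4}$ with $\si_{k}^{2}=\|\be_{k}\|_{\ch}^{2}$, and among the ``contraction with a residual divergence'' terms is $\frac12\sum_{k}\si_{k}^{2}\,\delta\big(\partial^{2}f(t_{k},x_{t_{k}})\be_{k}\big)$ coming from $m=3$. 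For fractional Brownian motion with $H\leq 1/4$ (so $\rho=1/(2H)\geq2$) one has $\si_{k}^{2}\asymp|\cp|^{2H}$, hence $\sum_{k}\si_{k}^{4}\asymp|\cp|^{4H-1}\to\infty$: the first term actually diverges, and the second is not $o(1)$ in $L^{2}$ either. Your exponent bookkeeping cannot rescue them: each such summand carries only two covariance factors of strength $1/\rho$ each (and the splitting of $\si_{k}^{2}$ into a $\rho'$-controlled diagonal increment plus a $\rho$-controlled rectangular one still leaves a purely rectangular piece of total weight $2/\rho\leq1$), so the condition $\frac1\rho+\frac1{\rho'}>1$ does not make them small.

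What is actually needed — and what the paper does — is an exact combinatorial cancellation among precisely these terms, not a smallness estimate. The paper splits the contraction $\al_{k}=\langle\mathbf{1}_{[0,t_{k}]},\be_{k}\rangle_{\ch}=-\frac12\si_{k}^{2}+\frac12\big(R(t_{k+1},t_{k+1})-R(t_{k},t_{k})\big)$ (Lemma \ref{e.alk}), regroups all contributions by the order $L$ of the residual divergence and the total number $\tau$ of $\si_{k}^{2}$-factors, and observes that for $\tau\geq1$, $L+\tau\leq\rho$ (exactly the range where individual terms need not vanish) the coefficients sum to $\sum_{q+j=\tau}\frac{(-1)^{j}}{2^{q+j}q!\,j!}=\big(\tfrac12-\tfrac12\big)^{\tau}/\tau!=0$, so these families cancel identically at the partition level (Step 8, case (v) of Theorem \ref{thm.dxdelta}); only the genuinely small families (cases (i)--(iii)) are estimated via H\"older and the 2D control, and the surviving pieces give $\cj(\partial f,\delta x,\cp)$ and $\frac12\cj(\partial^{2}f,dR,\cp)$. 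Since your grouping keeps only the lowest-order pair ($m=1$ contraction plus $m=2$ trace) and consigns all the other trace/contraction terms to a remainder claimed to be $O(|\cp|^{\theta})$, the central difficulty of the theorem for $\rho\geq2$ is not addressed; as written the argument only covers roughly the range $\rho<2$, where no cancellation is required.
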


In the following we make some remarks about Theorem \ref{thm.intro}. 

\begin{remark}
It is well-known that  $x$ is of finite $p$-variation almost surely for $p>2\rho$. So the   additional condition    $\rho<3/2$ or  $\frac{1}{\rho'}+\frac{1}{2\rho}>1$   ensures  that  the Young integral  $\int_{0}^{T} {\partial^{2}_{ii} f}  (t,x_{t})dR_{i}(t,t)$ is well-defined (see also Remark \ref{remark.young}). 
\end{remark}
\begin{remark}
Note that the condition  $\frac{1}{\rho'}+\frac{1}{2\rho}>1$ is always satisfied when $\rho'=1$. In other words,   
Theorem \ref{thm.intro} shows  that    the conversion formula \eqref{e.dxdelta11}   holds   whenever the diagonoal functions $R_{i}(t,t)$, $t\in[0,T]$ are of bounded variation.  
\end{remark}
\begin{remark}
   Theorem \ref{thm.intro} implies       the  following   It\^o formula: 
\begin{eqnarray}
f(T,x_{T}) &=& f(0,0)+\int_{0}^{T}\frac{\partial f}{\partial t}(t, x_{t})dt + \int_{0}^{T} {\partial f} (t, x_{t})\delta x_{t} + \frac12\sum_{i=1}^{d} \int_{0}^{T} {\partial^{2}_{ii} f} (t, x_{t})dR_{i}(t,t) . 
\notag
\end{eqnarray}
\end{remark}

 Theorem \ref{thm.intro} improves results of \cite{hu2013stratonovich} in several ways which we highlight in the following: 
   
   (i) We do not assume that  the Gaussian process $x$ generates a geometric rough path.  
   
    (ii)  Theorem \ref{thm.intro} shows that   the $[\rho]$-th order    expansion   is sufficient to  guarantee the convergence of 
      the Skorohod-Riemann sum
    \eqref{e.crss}, and consequently, of \eqref{e.wrs} for all $N\geq [\rho]$.  
   This improves the results of \cite{hu2013stratonovich}, in which   the $[2\rho]$-th order expansion is considered. Note that   the possibility of reducing the order of expansion was suggested  in  \cite[Section 4.3]{hu2013stratonovich}. 
 
 (iii) 
 We make minimal assumptions   for the Gaussian process or its covariance function.  

 The proof of Theorem \ref{thm.intro} combines  Malliavin calculus   with    rough path techniques  such as   the super-additivity of    a 2D control, which are summarized in Section~\ref{section.prelim}.   We define the  Stratonovich-type integral by examining     the Taylor expansion of the function $f(t,x)$, thereby eliminating  the need for a geometric rough path. This definition  also accommodates   non-autonomous integrands for the conversion formula.   
 
The paper is structured as follows. In Section \ref{section.prelim} we recall   basic results of Malliavin calculus and Gaussian processes and state some useful lemmas. In Section \ref{section.1d} we prove Theorem \ref{thm.intro} when $x $ is a  one-dimensional Gaussian process.   
 Then, in Section \ref{section.md} we extend Theorem \ref{thm.intro} to the multidimensional case.

\subsection{Notation}\label{section.notation} 
Throughout the paper we work on a  probability space $(\Omega, \mathscr{F}, P)$. If $X$ is a random variable, we denote by $\| X \|_{L^{p}}  $ the $L^{p} $-norm of $X$.
The letter $K$  stands  for  a constant independent of    any important parameters which can change from line to line. We write $A\lesssim B$ if there is a constant $K>0$ such that $A\leq KB$. We denote $[a]  $   the integer part of   $a$. 
We denote $\NN=\{1,2,\cdots\}$ and $\bar{\NN} =\NN\cup\{0\}$. 



\section{Preliminary results}\label{section.prelim}

In this section we recall the elements of     Malliavin calculus and Gaussian processes  and also  prepare some useful lemmas.

\subsection{Tensor product of Hilbert spaces}  
We briefly recall the basics of tensor product of Hilbert spaces. A detailed discussion on this topic can be found in e.g.  \cite{janson1997gaussian, kadison1986fundamentals, reed1972methods}.

Let $\ch_{1}$,\dots, $\ch_{m}$ be  real separable Hilbert spaces with inner products $\langle\cdot,\cdot\rangle_{\ch_{i}}$, $i=1,\dots,m$. Denote by $\ch_{1}\otimes \cdots\otimes \ch_{m}$ the tensor product of the   vector spaces $\ch_{1}$,\dots, $\ch_{m}$. We  define the inner product $\langle \cdot, \cdot\rangle_{\ch_{1}\otimes \cdots\otimes \ch_{m}}$ on $\ch_{1}\otimes \cdots\otimes \ch_{m}$ such that for $h_{i},g_{i}\in \ch_{i}$, $i=1,\dots, m$ we have:
\begin{eqnarray}
\langle h_{1}\otimes\cdots\otimes  h_{m}, g_{1}\otimes\cdots\otimes  g_{m} \rangle_{\ch_{1}\otimes \cdots\otimes \ch_{m}}  =  \langle h_{1}, g_{1}\rangle_{\ch_{1}} \cdots \langle h_{m}, g_{m}\rangle_{\ch_{m}}   . 
\label{e.prod}
\end{eqnarray}
Then we can  obtain a Hilbert space by taking  the completion of $\ch_{1}\otimes \cdots\otimes \ch_{m}$ under the inner product \eqref{e.prod}.  This Hilbert space will still be denoted by $\ch_{1}\otimes \cdots\otimes \ch_{m}$. Furthermore, the association law holds, namely,    there is an   unitary transformation   $U$ from $\ch_{1}\otimes \cdots \otimes \ch_{m_{1}+m_{2}} $ to  
$ 
(\ch_{1}\otimes \cdots\otimes \ch_{m_{1}})\otimes (\ch_{m_{1}+1}\otimes \cdots\otimes \ch_{m_{1}+m_{2}})   
$ 
such that 
\begin{eqnarray}
U(h_{1}\otimes \cdots \otimes h_{m_{1}+m_{2}}) =  (h_{1}\otimes \cdots \otimes h_{m_{1}})  \otimes (h_{m_{1}+1}\otimes \cdots \otimes  h_{m_{1}+m_{2}}) . 
\notag
\end{eqnarray}

\subsection{Elements of Malliavin calculus}  \label{section.m}
In this subsection we  recall the elements   of Malliavin calculus and state some preliminary results. We refer the reader to e.g.  \cite{hu2016analysis, nourdin2012normal, N06} for further details.  
Let $\ch$ be a real separable Hilbert space with inner product $\langle \cdot, \cdot\rangle_{\ch}$ and norm $\|\cdot\|_{\ch}=\langle \cdot, \cdot\rangle_{\ch}^{1/2}$. For $q\in\NN$ we denote by $\ch^{\otimes q}$ and $\ch^{\odot q}$, respectively, the $q$th tensor product and the $q$th symmetric tensor product of $\ch$.  Let $X = \{X(h): h\in\ch\}$ be an isonormal Gaussian process over $\ch$, namely, $X$ is a centered Gaussian family defined on some probability space $(\Omega, \cf , P)$ such that $\mE[X(g)X(h)]=\langle g,h \rangle_{\ch}$ for every $g, h\in\ch$.  Let $\mathit{S}$   denote the set of   random variables of the form
$
F=f(W(h_{1}), \dots, W(h_{n}))$,  
where  $n\geq 1$,  $h_{1}, \dots, h_{n}\in\ch$,  and $f:\RR^{m}\to \RR$ is a $C^{\infty}$-function such that $f$ and its partial derivatives have at most polynomial growth. The $l$th Malliavin derivative of $F\in \mathit{S}$ is   a random variable with values in $\ch^{\odot p}$ defined by:
\begin{eqnarray*}
D^{l} F &=& \sum_{i_{1}, \dots, i_{l}=1}^{m} \frac{\partial^{l}f}{\partial x_{i_{1}}\cdots \partial x_{i_{l}} } (X(h_{1}), \dots, X(h_{m})) h_{i_{1}}\otimes \cdots \otimes h_{i_{l}}. 
\end{eqnarray*}
For any $p\geq 1$ and any integer $k\geq 1$, we define the Sobolev space $\DD^{k,p}$ as the closure of $\mathit{S}$ with respect to the norm:
\begin{eqnarray}
\left\| F\right\| _{k,p}^{p}=\mathbb{E}\left[ \left| F\right| ^{p}\right] +
\sum_{l=1}^{k} \mathbb{E} \big[  
 \| D^{l} F \| ^p _{\mathcal{H}^{\otimes l}}
 \big] .
\label{e.sobolev}
\end{eqnarray}
Then we can extend the domain of $D^{k}$ to the space $\DD^{k,p} $. 
We denote by $\delta^{ k} $ the adjoint of the derivative operator $D^{k}$. We say $u\in\text{Dom}(\delta^{ k})$ if there is a $\delta^{ k} (u)\in L^{2}(\Omega)$ such that for any $F\in \DD^{k,2}$ the following duality relation  holds:
\begin{eqnarray}
\mE( \langle u, D^{k}F \rangle_{\ch^{\otimes k}} ) = \mE[\delta^{ k}(u)F]. 
\label{e.ibp}
\end{eqnarray}
When $k=1$ we simply write $\delta^{ 1}=\delta  $. We will also use the convention  that $\text{Dom}( \delta^{0}) = L^{2}(\Omega)$ and $\delta^{0}(u)=u$.   
 Recall that for  $h\in\ch: |h|_{\ch}=1$ we have the relation:  
 \begin{eqnarray}
\delta^{ k}(h^{\otimes k}) = H_{k}(\delta (h)), 
\label{e.dkh}
\end{eqnarray}
where $H_{k}(x)= (-1)^{k}e^{x^{2}/2}\frac{d^{k}}{dx^{k}} e^{-x^{2}/2}$ is called the Hermite polynomial of order $k$. 
For monomials it is well-known that we have the following Hermite decomposition: 
\begin{eqnarray}
x^{i}  =  \sum_{0\leq 2q\leq i} a^{i}_{i-2q} {H_{i-2q}(x)}  \,, 
\qquad
\text{where}\quad
a^{i}_{i-2q} = \frac{i!}{2^{q}q!(i-2q)!}\,,   
\qquad 
i\in\NN . 
\label{e.xah}
\end{eqnarray}

Let us recall the following   result (see e.g. \cite[Proposition 2.5.4]{nourdin2012normal}):
\begin{lemma}\label{lemma.dfu}
Let $F\in \DD^{1,2}$ and $u\in \emph{Dom}( \delta)$ be such that the expectations $\mE[F^{2}\|u\|_{\ch}^{2}]$, $\mE[F^{2}\delta(u)^{2}]$, $\mE[ \langle DF,u \rangle_{\ch}^{2}]$ are finite. Then $Fu\in \emph{Dom}( \delta)$ and 
\begin{eqnarray}
F\delta (u)=\delta (Fu)+\langle DF,u \rangle_{\ch} . 
\label{e.dfu}
\end{eqnarray}
\end{lemma}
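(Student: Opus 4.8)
The plan is to verify directly, from the definition of the divergence operator through the duality relation \eqref{e.ibp}, that $Fu$ lies in $\text{Dom}(\delta)$ with divergence equal to the claimed expression. The natural candidate for the divergence is
\[
v := F\delta (u) - \langle DF, u \rangle_{\ch},
\]
and the three finiteness hypotheses are exactly what is needed to make the two objects entering the duality relation square integrable: the condition $\mE[F^{2}\|u\|_{\ch}^{2}]<\infty$ gives $Fu\in L^{2}(\Omega;\ch)$, while $\mE[F^{2}\delta(u)^{2}]<\infty$ and $\mE[\langle DF,u\rangle_{\ch}^{2}]<\infty$ together give $v\in L^{2}(\Omega)$. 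Thus I would first record these integrability facts, which ensure that all the expectations written below are finite and that $v$ is an admissible candidate for $\delta(Fu)$.

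Next I would establish the testing identity $\mE(\langle Fu, DG \rangle_{\ch}) = \mE[vG]$ for $G$ ranging over a convenient dense class of smooth functionals. For smooth $F$ and bounded smooth $G$ one has $FG\in\DD^{1,2}$, together with the Leibniz rule $D(FG)=F\,DG+G\,DF$. Writing
\[
\langle Fu, DG \rangle_{\ch} = \langle u, F\,DG \rangle_{\ch} = \langle u, D(FG) \rangle_{\ch} - G\,\langle u, DF \rangle_{\ch},
\]
taking expectations, and applying the duality relation \eqref{e.ibp} to $FG\in\DD^{1,2}$, I obtain
\[
\mE(\langle Fu, DG \rangle_{\ch}) = \mE[\delta(u)\,FG] - \mE[G\,\langle DF, u \rangle_{\ch}] = \mE[vG].
\]
Since this holds for all $G$ in the testing class, the defining property of $\delta$ in \eqref{e.ibp} forces $Fu\in\text{Dom}(\delta)$ with $\delta(Fu)=v$; rearranging gives exactly $F\delta(u)=\delta(Fu)+\langle DF,u\rangle_{\ch}$, which is \eqref{e.dfu}. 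It suffices here to test against functionals of the form $G=g(X(h_{1}),\dots,X(h_{N}))$ with $g$ smooth, bounded, and with bounded derivatives, since this family is dense enough in $\DD^{1,2}$ to determine the divergence uniquely.

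The main obstacle is the passage from smooth $F$ and nice $u$ to the general $F\in\DD^{1,2}$ and $u\in\text{Dom}(\delta)$ appearing in the statement. I would approximate $F$ by smooth functionals $F_{n}\to F$ in $\DD^{1,2}$, so that $F_{n}\to F$ in $L^{2}(\Omega)$ and $DF_{n}\to DF$ in $L^{2}(\Omega;\ch)$, and then argue that both sides of the testing identity converge under the stated integrability conditions. The delicate point is to pass to the limit simultaneously in the three terms: one must keep $F_{n}u\to Fu$ convergent in $L^{2}(\Omega;\ch)$ using $\mE[F^{2}\|u\|_{\ch}^{2}]<\infty$, control $F_{n}\delta(u)\to F\delta(u)$ using $\mE[F^{2}\delta(u)^{2}]<\infty$, and control $\langle DF_{n},u\rangle_{\ch}\to\langle DF,u\rangle_{\ch}$ using $\mE[\langle DF,u\rangle_{\ch}^{2}]<\infty$, with a uniform integrability argument to transfer the identity to the limit. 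Once these limits are justified, the candidate $v$ is identified as $\delta(Fu)$ in full generality, completing the proof.
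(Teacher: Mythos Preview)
The paper does not actually prove this lemma: it is stated as a known result with a reference to \cite[Proposition 2.5.4]{nourdin2012normal}. Your argument is essentially the standard textbook proof that appears in that reference (and in Nualart's book), so there is nothing to compare against here.

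One small comment on your approximation step: you do not in fact need $F_{n}u\to Fu$ in $L^{2}(\Omega;\ch)$, and that convergence does not follow directly from the stated hypotheses. What you actually need is much weaker, namely that for each fixed smooth bounded test functional $G$ the three scalar quantities $\mE[F_{n}\langle u,DG\rangle_{\ch}]$, $\mE[F_{n}\delta(u)G]$, and $\mE[\langle DF_{n},u\rangle_{\ch}G]$ converge to their counterparts with $F$; each of these follows from Cauchy--Schwarz using only $F_{n}\to F$ in $\DD^{1,2}$, $u\in L^{2}(\Omega;\ch)$, $\delta(u)\in L^{2}(\Omega)$, and boundedness of $G$ and $DG$. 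Alternatively, and more simply, you can avoid approximating $F$ altogether: for $F\in\DD^{1,2}$ and $G$ smooth cylindrical with bounded $g$ and bounded derivatives, one has $FG\in\DD^{1,2}$ directly with $D(FG)=F\,DG+G\,DF$, so the duality computation goes through for general $F\in\DD^{1,2}$ without any limiting argument. The three finiteness hypotheses are then used exactly as you say: to ensure $Fu\in L^{2}(\Omega;\ch)$ and $v\in L^{2}(\Omega)$, so that the testing identity extends by density from smooth $G$ to all of $\DD^{1,2}$.
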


Let $\cv$ be another real separable Hilbert space. Let $S_{\cv}$ be the collection of all smooth $\cv$-valued random variables of the form $F = \sum_{j=1}^{n}F_{j}v_{j}$, where $F_{j}\in S$, $v_{j}\in \cv$, $n\in\NN$.  We define the Malliavin derivatives of $F\in S_{\cv}$  by $D^{k}F = \sum_{j=1}^{n}D^{k}F_{j}\otimes  v_{j}$. Similar to \eqref{e.sobolev}, we can extend the domain of $D^{k}$ to the space $\DD^{k,p}(\cv)$, which is defined as the closure of $S_{\cv}$ with respect to the norm 
\begin{eqnarray}
\|F\|_{\DD^{k,p}(\cv)}^{p} = \mE[\|F\|_{\cv}^{p}]+\sum_{l=1}^{k} \mE[\|D^{l}F\|_{\ch^{\otimes l}\otimes \cv}^{p}] . 
\notag
\end{eqnarray}

%
%
%
%


The  following result can be considered   an extension of Lemma \ref{lemma.dfu}.
\begin{lemma}\label{lemma.fdi}
Let $F\in \DD^{ i,2}$, $i\in\NN$ and $h\in \ch$, and set  
\begin{eqnarray}
 u_{j}=\langle D^{j}F, h^{\otimes j}\rangle_{\ch^{\otimes j}} \cdot h^{\otimes (i-j)} , \qquad j=0,\dots, i.
\label{e.hdfh}
\end{eqnarray}
Then $u_{j} \in \emph{Dom} (\delta^{i-j})$, and we have the   estimate: 
\begin{eqnarray}
\|\delta^{i-j}(u_{j})\|_{L^{2}} \leq K\cdot\|F\|_{i,2} \cdot \|h\|_{\ch}^{i}  
\label{e.dujb}
\end{eqnarray}
for all $j=0,\dots, i$, where $K$ is a constant depending on $i$ only. 
Furthermore,   we   have the   relation: 
\begin{eqnarray}
F\delta^{i}(h^{\otimes i}) = \sum_{j=0}^{i} {i\choose j} \delta^{i-j} (u_{j}) . 
\label{e.gfdh}
\end{eqnarray}
\end{lemma}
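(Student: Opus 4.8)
The plan is to prove the quantitative bound \eqref{e.dujb} first, which already yields $u_{j}\in\mathrm{Dom}(\delta^{i-j})$, and then to obtain the algebraic identity \eqref{e.gfdh} by a duality computation for smooth $F$, extending to general $F\in\DD^{i,2}$ by density. Observe at the outset that every object in the statement is homogeneous of degree $i$ in $h$: replacing $h$ by $\lambda h$ multiplies $\delta^{i}(h^{\otimes i})$, each $u_{j}$, and hence each $\delta^{i-j}(u_{j})$, by $\lambda^{i}$. Discarding the trivial case $h=0$, we may therefore assume $\|h\|_{\ch}=1$. It will be convenient to write $D_{h}^{j}F:=\langle D^{j}F,h^{\otimes j}\rangle_{\ch^{\otimes j}}$ for the $j$-th directional derivative of $F$ along $h$, so that $u_{j}=(D_{h}^{j}F)\,h^{\otimes(i-j)}$ and, for any smooth $G$, $\langle u_{j},D^{i-j}G\rangle_{\ch^{\otimes(i-j)}}=(D_{h}^{j}F)(D_{h}^{i-j}G)$.

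For the estimate, note that contraction against the deterministic tensor $h^{\otimes j}$ commutes with the Malliavin derivative, so that for $0\le l\le i-j$ one has $D^{l}u_{j}=\langle D^{j+l}F,h^{\otimes j}\rangle\otimes h^{\otimes(i-j)}$, the contraction acting on $j$ of the $j+l$ slots. By Cauchy--Schwarz,
\[
\|D^{l}u_{j}\|_{\ch^{\otimes l}\otimes\ch^{\otimes(i-j)}}\le\|D^{j+l}F\|_{\ch^{\otimes(j+l)}}\,\|h\|_{\ch}^{\,j}\,\|h\|_{\ch}^{\,i-j}=\|D^{j+l}F\|_{\ch^{\otimes(j+l)}}\,\|h\|_{\ch}^{\,i}.
\]
Summing over $l$ and using $j+l\le i$ gives $\|u_{j}\|_{\DD^{i-j,2}(\ch^{\otimes(i-j)})}\le K\|F\|_{i,2}\|h\|_{\ch}^{i}$. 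Combined with the standard continuity estimate $\|\delta^{k}(v)\|_{L^{2}}\le c_{k}\|v\|_{\DD^{k,2}(\ch^{\otimes k})}$ for the iterated divergence (see e.g. \cite{N06}), this shows $u_{j}\in\mathrm{Dom}(\delta^{i-j})$ and proves \eqref{e.dujb}.

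To prove \eqref{e.gfdh} we first take $F\in S$ and test the claimed identity against an arbitrary $G\in S$. Since $h^{\otimes i}$ is a deterministic element of $\ch^{\otimes i}$ (cf. \eqref{e.dkh}) and $FG\in\DD^{i,2}$, the duality relation \eqref{e.ibp} gives $\mE[FG\,\delta^{i}(h^{\otimes i})]=\mE[\langle D^{i}(FG),h^{\otimes i}\rangle]=\mE[D_{h}^{i}(FG)]$. As $D_{h}$ is a derivation, the ordinary Leibniz rule yields $D_{h}^{i}(FG)=\sum_{j=0}^{i}{i\choose j}(D_{h}^{j}F)(D_{h}^{i-j}G)$. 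On the other hand, applying \eqref{e.ibp} once more, this time to $u_{j}\in\mathrm{Dom}(\delta^{i-j})$ with test variable $G$, gives $\mE[\delta^{i-j}(u_{j})G]=\mE[\langle u_{j},D^{i-j}G\rangle]=\mE[(D_{h}^{j}F)(D_{h}^{i-j}G)]$. Comparing the two displays, $\mE[(F\delta^{i}(h^{\otimes i})-\sum_{j=0}^{i}{i\choose j}\delta^{i-j}(u_{j}))G]=0$ for every $G\in S$; since $S$ is dense in $L^{2}(\Omega)$ and both terms inside the bracket lie in $L^{2}$ when $F\in S$, identity \eqref{e.gfdh} follows for $F\in S$.

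Finally, for general $F\in\DD^{i,2}$ choose $F_{n}\in S$ with $F_{n}\to F$ in $\DD^{i,2}$; applying the bound of the second paragraph to $F_{n}-F$ shows that the corresponding $u_{j}^{(n)}$ converge to $u_{j}$ in $\DD^{i-j,2}(\ch^{\otimes(i-j)})$, hence $\delta^{i-j}(u_{j}^{(n)})\to\delta^{i-j}(u_{j})$ in $L^{2}$, so the right-hand side of \eqref{e.gfdh} for $F_{n}$ converges in $L^{2}$. As $F_{n}\to F$ in $L^{2}$, we also have $F_{n}\delta^{i}(h^{\otimes i})\to F\delta^{i}(h^{\otimes i})$ in probability, so the $L^{2}$-limit just obtained is a version of $F\delta^{i}(h^{\otimes i})$; in particular this product lies in $L^{2}$ and \eqref{e.gfdh} holds. (An alternative would be to induct on $i$, peeling off one divergence at a time with Lemma \ref{lemma.dfu} and the recursion $\delta^{i}(h^{\otimes i})=\delta(\delta^{i-1}(h^{\otimes(i-1)})\,h)$, but that route demands more tensor bookkeeping.) The one genuinely delicate point is that for $F$ merely in $\DD^{i,2}$ the product $F\delta^{i}(h^{\otimes i})$ is not a priori square-integrable, so \eqref{e.gfdh} must be read as the assertion that it — equivalently, the $L^{2}$-limit above — does lie in $L^{2}$ and equals the stated sum of divergences; the estimate \eqref{e.dujb} is exactly what legitimizes the density argument.
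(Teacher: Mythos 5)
Your proposal is correct and follows essentially the same route as the paper: the bound \eqref{e.dujb} is obtained by commuting the contraction against $h^{\otimes j}$ with the Malliavin derivative, applying Cauchy--Schwarz, and invoking Meyer's inequality to get $u_{j}\in\mathrm{Dom}(\delta^{i-j})$, and the identity \eqref{e.gfdh} comes from the same double duality computation combined with the Leibniz rule and the product structure of the inner product. The only (harmless) difference is procedural: the paper runs the duality argument directly for $F\in\DD^{i,2}$ tested against smooth compactly supported $G$, using density in $G$ alone, whereas you first prove the identity for smooth $F$ and then pass to general $F$ by approximation in $\DD^{i,2}$, which also cleanly settles the square-integrability of $F\delta^{i}(h^{\otimes i})$ that you rightly flag.
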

\begin{proof}
We     differentiate  $u_{j}$ in  \eqref{e.hdfh} and then take the $\|\cdot\|_{\ch^{\otimes  (i-j+L)}}$ norm to get
\begin{eqnarray}
\|D^{L}u_{j}\|_{\ch^{\otimes  (i-j+L)}} 
= \|  h^{\otimes (i-j)} D^{L}  \langle D^{j} F   ,  h^{\otimes j}\rangle_{\ch^{\otimes j}}\|_{\ch^{\otimes (j+L)}} 
\notag
\\
=   \|   D^{L}  \langle D^{j} F   ,  h^{\otimes j}\rangle_{\ch^{\otimes j}}\|_{\ch^{\otimes L}}  \cdot \| h \|_{\ch}^{ i-j} . 
\label{e.duh}
\end{eqnarray}
In the following we  bound $\|   D^{L}  \langle D^{j} F   ,  h^{\otimes j}\rangle_{\ch^{\otimes j}}\|_{\ch^{\otimes L}} $. Take  a vector $g\in \ch^{\otimes L}$. Then we have 
\begin{eqnarray}
\langle  D^{L} \langle D^{j } F   ,  h^{\otimes j}\rangle_{\ch^{\otimes j}} , g \rangle_{\ch^{\otimes L}} =   \langle D^{j+L} F   ,  h^{\otimes j}\otimes g\rangle_{\ch^{\otimes (j+L)}}  . 
\label{e.dfhg}
\end{eqnarray}
 Applying the triangle inequality to the right-hand side of \eqref{e.dfhg} we   obtain:
 \begin{eqnarray}
\langle  D^{L} \langle D^{j} F   ,  h^{\otimes j}\rangle_{\ch^{\otimes j}} , g \rangle_{\ch^{\otimes L}}\leq \|D^{j+L}F\|_{\ch^{\otimes (j+L)}}  
\cdot \| h^{\otimes j}\otimes g\|_{\ch^{\otimes (j+L)}}  
\notag
\\
= \|D^{j+L}F\|_{\ch^{\otimes (j+L)}} \cdot \|h\|_{\ch}^{j}  
\cdot \|   g\|_{\ch^{\otimes L}}  . 
\notag
\end{eqnarray}
Since $g\in\ch$ is arbitrary,  
this implies that 
\begin{eqnarray}
\|     \langle D^{j+L} F   ,  h^{\otimes j}\rangle_{\ch^{\otimes j}}\|_{\ch^{\otimes L}} \leq \|D^{j+L}F\|_{\ch^{\otimes (j+L)}} \cdot \|h\|_{\ch}^{j}   . 
\label{e.dfhn}
\end{eqnarray}
Substituting the estimate \eqref{e.dfhn} into  \eqref{e.duh} we obtain
\begin{eqnarray}
\|D^{L}u_{j}\|_{\ch^{\otimes  (i-j+L)}}  \leq \|D^{j+L}F\|_{\ch^{\otimes (j+L)}}\cdot \|h\|_{\ch}^{i} .  
\label{e.duj}
\end{eqnarray}
Taking the $L^{2}(\Omega)$ norm in both sides of \eqref{e.duj} we thus obtain  \begin{eqnarray}
\mE[\|D^{L}u_{j}\|_{\ch^{\otimes  (i-j+L)}}^{2}   ]^{1/2} \leq \|F\|_{i,2} \cdot \|h\|_{\ch}^{i}  
\label{e.dujn}
\end{eqnarray}
 for all $L=1,\dots, i-j$ and $ j=0,\dots, i$.
 It follows that $  u_{j}  \in \DD^{(i-j), 2}(\ch^{\otimes (i-j)})$ and we have the relation 
    \begin{eqnarray}
\|u_{j}\|_{\DD^{(i-j), 2}(\ch^{\otimes (i-j)})}\leq \|F\|_{i,2} \cdot \|h\|_{\ch}^{i}\,. 
\notag
\end{eqnarray}
  By Meyer's inequality (see e.g. \cite[Theorem 2.5.5]{nourdin2012normal}) we obtain that  $u_{j} \in \text{Dom} (\delta^{i-j})$ and the estimate \eqref{e.dujb} holds.

We turn to the proof of \eqref{e.gfdh}. 
Let $G$ be any  smooth random variable. That is, $G$ has the form $G= f(W(h_{1}), \dots, W(h_{L}))$, where $f\in C_{c}^{\infty}(\RR^{L})$ and $h_{1}, \dots, h_{L}\in \ch$, $L\geq1$. Since $F \in \DD^{i,2}$ and $G$ is smooth,   we have $F\cdot G\in \DD^{i,2}$, and by applying the duality relation  \eqref{e.ibp} we get:    
\begin{eqnarray}
\mE[GF\delta^{i}(h^{\otimes i}) ]= \mE[\langle D^{i}[GF],  h^{\otimes i}\rangle_{\ch^{\otimes i}}]. 
\label{e.gfd}
\end{eqnarray}
Furthermore, by applying the Leibniz rule (see e.g. \cite[Exercise 2.3.10]{nourdin2012normal}) to $D^{i}[GF]$ in \eqref{e.gfd} we obtain:
\begin{eqnarray}
 \mE[GF\delta^{i}(h^{\otimes i}) ]&=&\sum_{j=0}^{i}{i\choose j}  \mE\big[\langle D^{i-j} G \tilde{\otimes}D^{j} F ,  h^{\otimes i}\rangle_{\ch^{\otimes i}}\big] . 
\label{e.leibniz}
\end{eqnarray}
Applying \eqref{e.prod} to the inner product in the right-hand side of \eqref{e.leibniz}  yields:
\begin{eqnarray}
\mE[GF\delta^{i}(h^{\otimes i}) ] &=&
 \sum_{j=0}^{i}{i\choose j}  \mE\big[\langle D^{i-j} G   ,  h^{\otimes (i-j)}\rangle_{\ch^{\otimes (i-j)}} \cdot \langle D^{j} F   ,  h^{\otimes j}\rangle_{\ch^{\otimes j}}  \big].
\label{e.gfd1}
\end{eqnarray}
Recall that $u$ is given in  \eqref{e.hdfh}. So equation \eqref{e.gfd1}   can be written as:  
\begin{eqnarray}
\mE[GF\delta^{i}(h^{\otimes i}) ] &=&\sum_{j=0}^{i}{i\choose j}  \mE\big[\langle D^{i-j} G   ,  u \rangle_{\ch^{\otimes (i-j)}} \big]. 
\label{e.gfd2}
\end{eqnarray}

 

  We have shown that $u_{j} \in \text{Dom} (\delta^{i-j})$. So by    applying \eqref{e.ibp} to  \eqref{e.gfd2} we   obtain:     
\begin{eqnarray}
\mE[GF\delta^{i}(h^{\otimes i}) ] =\sum_{j=0}^{i}{i\choose j}  \mE[   G   \delta^{i-j}(  u_{j} ) ] . 
\label{e.gfd3}
\end{eqnarray}
Since smooth random variables are dense in $L^{2} (\Omega)$, relation \eqref{e.gfd3} implies that  $\eqref{e.gfdh}$ holds. 
This completes the proof. 
\end{proof}
We will   need the following technical lemma. 
\begin{lemma}\label{lemma.ddfhg}
Let $i,i'\in\NN$,  $g,h\in \ch$ and $F\in \DD^{ i+i',2}$.  
Then $\delta^{i}(F h^{\otimes i})\in \DD^{i', 2}$, and we have the relation:
\begin{eqnarray}
\langle D^{i'}\delta^{i}(F h^{\otimes i}), g^{\otimes i'}\rangle_{\ch^{\otimes i'}} = \sum_{j=0}^{i'\wedge i} {i\choose j} {i'\choose j}  j!  \langle h , g \rangle_{\ch}^{j}\cdot \delta^{i-j}\Big( \langle D^{i'-j}F ,  g^{\otimes (i'-j)} \rangle_{\ch^{\otimes (i'-j)}} h^{\otimes (i-j)} \Big). \qquad
\label{e.ddeltaf}
\end{eqnarray}
\end{lemma}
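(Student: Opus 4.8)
The plan is to establish \eqref{e.ddeltaf} by induction on $i$, the order of the divergence operator. The membership $\delta^{i}(Fh^{\otimes i})\in\DD^{i',2}$ follows from Meyer's inequalities once we control the Malliavin--Sobolev norm of $\delta^{i}(Fh^{\otimes i})$; this is essentially the same computation as in the proof of Lemma~\ref{lemma.fdi}, so I would only sketch it. The heart of the matter is the commutation identity, and for that the key analytic input is the standard commutation relation between the Malliavin derivative and the divergence, namely $D\delta(u) = u + \delta(Du)$ applied in the Hilbert-space-valued setting, together with the Leibniz-type formula for the derivative of a contraction already used implicitly in the proof of Lemma~\ref{lemma.fdi}.

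\medskip

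\noindent\textbf{Base case $i=1$.} Here $Fh\in\DD^{i',2}$ (by Lemma~\ref{lemma.dfu}-type estimates extended to higher order) and I would compute $D^{i'}\delta(Fh)$ by iterating $D\delta(u)=u+\delta(Du)$: one gets $D^{i'}\delta(Fh) = \sum_{\ell} \binom{i'}{\ell}$-type terms, but because $h$ is deterministic the only surviving terms come from distributing at most one derivative "onto the slot occupied by $h$" versus "under the $\delta$''. Pairing against $g^{\otimes i'}$ and using \eqref{e.prod} to turn $\langle h,g\rangle_{\ch}$ into a scalar, one lands exactly on the right-hand side of \eqref{e.ddeltaf} with $i=1$, where the $j=0$ term is $\delta(\langle D^{i'}F, g^{\otimes i'}\rangle_{\ch^{\otimes i'}}\, h)$ and the $j=1$ term is $i'\langle h,g\rangle_{\ch}\langle D^{i'-1}F, g^{\otimes(i'-1)}\rangle_{\ch^{\otimes(i'-1)}}$.

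\medskip

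\noindent\textbf{Inductive step.} Writing $\delta^{i}(Fh^{\otimes i}) = \delta\big(\delta^{i-1}(Fh^{\otimes(i-1)})\, h\big)$ — valid since $\delta^{i-1}(Fh^{\otimes(i-1)})\,h$ lies in $\mathrm{Dom}(\delta)$ by the norm bounds — I would apply the base-case computation with $F$ replaced by the random variable $\delta^{i-1}(Fh^{\otimes(i-1)})$, then expand $\langle D^{i'-j}[\delta^{i-1}(Fh^{\otimes(i-1)})], g^{\otimes(i'-j)}\rangle$ using the induction hypothesis at order $i-1$, and finally collect terms. The bookkeeping reduces to the Vandermonde-type identity
\begin{eqnarray}
\binom{i'}{\ell}\binom{i'-\ell}{j-\ell}\binom{i-1}{j-\ell} = \binom{i}{j}\binom{i'}{j}j!\Big/\big[\ell!(j-\ell)!\,(\text{remaining factors})\big],
\notag
\end{eqnarray}
which one checks collapses precisely to the single coefficient $\binom{i}{j}\binom{i'}{j}j!$ after summing over the split $\ell$ of the $j$ contracted pairs between "the last $h$'' and "the first $i-1$ copies of $h$''.

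\medskip

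\noindent\textbf{Main obstacle.} I expect the genuinely delicate part to be the combinatorial reorganization in the inductive step: one must track how the $j$ pairings $\langle h,g\rangle_{\ch}^{j}$ in \eqref{e.ddeltaf} arise from pairings produced at the previous level plus one new pairing produced by the outermost $D\delta$, and verify that all cross-terms assemble into the clean product of binomials. A secondary, more routine point is justifying every application of $D\delta(u)=u+\delta(Du)$ and of the duality \eqref{e.ibp}, i.e. checking at each stage that the relevant random fields lie in the right domains; this is handled uniformly by the $\DD^{k,2}$-norm estimates of the Lemma~\ref{lemma.fdi} type, using $F\in\DD^{i+i',2}$ so that enough derivatives are available throughout the recursion.
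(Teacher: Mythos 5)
Your plan is essentially correct, but it is organized differently from the paper's proof. The paper runs two nested inductions: it first proves the case $i'=1$ by induction on $i$, and it does so without ever factorizing $\delta^{i}$ as an iterated divergence --- the inductive step is driven by the product formula \eqref{e.gfdh} of Lemma \ref{lemma.fdi} (expand $\delta^{i+1}(Fh^{\otimes(i+1)})$ through $F\delta^{i+1}(h^{\otimes(i+1)})$ minus lower-order divergences, differentiate in the direction $g$, and recombine with \eqref{e.gfdh}); it then obtains general $i'$ by a second induction on $i'$, applying the $i'=1$ identity termwise to the right-hand side of \eqref{e.ddeltaf}. You instead run a single induction on $i$, proving the statement for all $i'$ at once: base case $i=1$ via iterating the commutation $D\delta=\mathrm{id}+\delta D$ (equivalently, differentiating \eqref{e.dfu} $i'$ times), and inductive step via the factorization $\delta^{i}(Fh^{\otimes i})=\delta\big(\delta^{i-1}(Fh^{\otimes(i-1)})\,h\big)$ with the base case applied to $G=\delta^{i-1}(Fh^{\otimes(i-1)})$. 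This is a legitimate and arguably more direct route, and the induction is coherent: the membership $G\in\DD^{i'+1,2}$ comes from the level-$(i-1)$ statement with $i'+1$ in place of $i'$, which is available because $F\in\DD^{i+i',2}$. What the paper's organization buys is that everything reduces to facts already proved (Lemma \ref{lemma.dfu} and Lemma \ref{lemma.fdi}); what yours buys is a shorter combinatorial bookkeeping, since the outermost level produces at most one new pairing.

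Two caveats you should address to make the plan a proof. First, the factorization $\delta^{k}(\Phi h^{\otimes k})=\delta\big(\delta^{k-1}(\Phi h^{\otimes(k-1)})h\big)$ --- which you use both to launch the inductive step and to reassemble $\delta\big(\delta^{i-1-m}(\cdot)\,h\big)$ into $\delta^{i-m}(\cdot)$ when collecting terms --- is nowhere established in the paper, and membership in $\mathrm{Dom}(\delta)$ alone does not identify the iterated divergence with $\delta^{k}$; you need a short duality argument via \eqref{e.ibp} against smooth $G$ (using the symmetry of $D^{k}G$) together with density and the norm estimates of Lemma \ref{lemma.fdi}. Second, the displayed ``Vandermonde-type'' identity is not the correct bookkeeping: since the outermost divergence carries a single copy of $h$, the collection of coefficients reduces to the identity $i'{i'-1\choose j-1}(j-1)!={i'\choose j}\,j!$ combined with Pascal's rule ${i-1\choose j}+{i-1\choose j-1}={i\choose j}$, which indeed yields ${i\choose j}{i'\choose j}j!$; your stated conclusion is right, but the identity as written would not check out and should be replaced by this computation.
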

\begin{proof} 
As in Lemma \ref{lemma.fdi} 
we can show that $ \langle D^{i'-j}F ,  g^{\otimes (i'-j)} \rangle_{\ch^{\otimes (i'-j)}} h^{\otimes (i-j)} \in \text{Dom}(\delta^{i-j} )$, $j=0,\dots, i$, and therefore the right-hand side  of \eqref{e.ddeltaf} is well-defined. 
In the following, the proof is divided  into  two steps. In the first step, we consider the case when $i'=1$. Then in the second step we prove the lemma for all $i'$ and $ i$. 

When $i'=1$ relation \eqref{e.ddeltaf} becomes
\begin{eqnarray}
\langle D \delta^{i}(F h^{\otimes i}), g \rangle_{\ch } = 
  \delta^{i}( \langle D F ,  g  \rangle_{\ch} h^{\otimes i} )
+
i   \langle h , g \rangle_{\ch} \cdot \delta^{i-1}( F   h^{\otimes (i-1)} ) . 
\label{e.ddfhg1}
\end{eqnarray} 
In the following,  we show by induction that $\delta^{i}(F h^{\otimes i})\in \DD^{1,2}$ and that relation \eqref{e.ddfhg1} holds for all $i\in \NN$. 


Recall the relation \eqref{e.dfu}. So we have  $\delta (F h ) = F\delta(h)-\langle DF, h\rangle_{\ch}$. By differentiating   both sides of this relation in the direction of $g$, we obtain that
\begin{eqnarray}
\langle D \delta (F h ), g \rangle_{\ch } = 
  \delta ( \langle D F ,  g  \rangle_{\ch} h  )
+
    \langle h , g \rangle_{\ch} \cdot   F . 
\notag
\end{eqnarray}
This implies that $\delta (F h )\in \DD^{1,2}$ and \eqref{e.ddfhg1} holds when $i=1$. 

Applying relation \eqref{e.gfdh} with $i$ replaced by $i+1$ we obtain 
\begin{eqnarray}
\delta^{i+1}(F h^{\otimes (i+1)})  = F\delta^{i+1}(h^{\otimes (i+1)}) - \sum_{j=1}^{i+1} {i+1\choose j} \delta^{i+1-j} (\tilde{u}_{j}) ,
\label{e.dfh1}
\end{eqnarray}
where $\tilde{u}_{j}=\langle D^{j}F, h^{\otimes j}\rangle_{\ch^{\otimes j}} \cdot h^{\otimes (i+1-j)} $,  $ j=0,\dots, i+1$. 
Differentiating both sides of \eqref{e.dfh1} in $g$ we obtain:
\begin{eqnarray}
\langle D\delta^{i+1}(F h^{\otimes (i+1)})  , g \rangle_{\ch}
 = 
\langle DF  , g \rangle_{\ch}
\delta^{i+1}(h^{\otimes (i+1)})+(i+1)F \delta^{i}(h^{\otimes i})
\langle h  , g \rangle_{\ch}
\notag
 \\
 - \sum_{j=1}^{i+1} {i+1\choose j} \langle D\delta^{i+1-j} (\tilde{u}_{j}), g\rangle_{\ch} . 
\label{e.ddfhg}
\end{eqnarray}
Note that $i+1-j\leq i$ in \eqref{e.ddfhg}.
By applying the induction assumption \eqref{e.ddfhg1}  with $\langle D^{j}F, h^{\otimes j}\rangle_{\ch^{\otimes j}} $ and $  i+1-j  $ in places of $ F $ and $i$  we have $\delta^{i+1-j} (\tilde{u}_{j})\in \DD^{1,2}$ and 
\begin{eqnarray}
\langle D\delta^{i+1-j} (\tilde{u}_{j}), g\rangle_{\ch}  &=& 
\delta^{i+1-j}(u_{j}' )
 +
(i+1-j)   \langle h , g \rangle_{\ch} \cdot \delta^{i-j}( u_{j} ), \qquad
\label{e.ddujg}
\end{eqnarray}
where $u_{j}' = \langle D^{j}\langle D F ,  g  \rangle_{\ch}, h^{\otimes j}\rangle_{\ch^{\otimes j}} \cdot h^{\otimes (i+1-j)} 
$ and $u_{j}$ is defined as in \eqref{e.hdfh}. 
On the other hand, 
applying  relation \eqref{e.gfdh} with $i+1$ and $ \langle D F ,  g  \rangle_{\ch} $ in the places of $i$ and $F$,  we obtain that 
\begin{eqnarray}
\delta^{i+1}\big( \langle D F ,  g  \rangle_{\ch} h^{\otimes (i+1)} \big) =  
\langle D F ,  g  \rangle_{\ch} \delta^{i+1}(h^{\otimes (i+1)}) - \sum_{j=1}^{i+1} {i+1\choose j} \delta^{i+1-j} (u_{j}'). 
\label{e.ddfgh}
\end{eqnarray}
 Substituting \eqref{e.ddujg} into \eqref{e.ddfhg} and then taking the difference between \eqref{e.ddfhg} and \eqref{e.ddfgh} we obtain: 
\begin{eqnarray}
&&\langle D\delta^{i+1}(F h^{\otimes (i+1)})  , g \rangle_{\ch} -\delta^{i+1}\big( \langle D F ,  g  \rangle_{\ch} h^{\otimes (i+1)} \big)
\notag
\\
&& =
(i+1)F \delta^{i}(h^{\otimes i})
\langle h  , g \rangle_{\ch}  - \sum_{j=1}^{i} {i+1\choose j}  (i+1-j)   \langle h , g \rangle_{\ch} \cdot \delta^{i-j}\big( \langle D^{j}F, h^{\otimes j}\rangle_{\ch^{\otimes j}}   h^{\otimes (i-j)} \big)
\notag
\\
&&=(i+1)  \langle h , g \rangle_{\ch}\cdot    \Big( F \delta^{i}(h^{\otimes i})
   - \sum_{j=1}^{i} {i\choose j}     \cdot \delta^{i-j}\big( \langle D^{j}F, h^{\otimes j}\rangle_{\ch^{\otimes j}}   h^{\otimes (i-j)} \big)
   \Big). 
\label{e.ddfhgd}
\end{eqnarray}
Note that   the summation in \eqref{e.ddfgh} does not contain   the case $j=i+1$ since we have $i+1-j=0$ in \eqref{e.ddujg}. 
Now, invoking relation \eqref{e.gfdh} in the right-hand side of \eqref{e.ddfhgd} we obtain that 
\begin{eqnarray}
\langle D\delta^{i+1}(F h^{\otimes (i+1)})  , g \rangle_{\ch} -\delta^{i+1}( \langle D F ,  g  \rangle_{\ch} h^{\otimes (i+1)} ) = (i+1)  \langle h , g \rangle_{\ch} \delta^{i}(u_{0}). 
\notag
\end{eqnarray}
This gives \eqref{e.ddfhg1} when $i +1$ is in the place of $i $. This proves relation \eqref{e.ddfhg1}, and thus \eqref{e.ddeltaf}  for $i'=1$ and all $i\in\NN$.



In the following we prove by induction in the integer $i'$ that $\delta^{i}(F h^{\otimes i})\in \DD^{i', 2}$ and relation  \eqref{e.ddeltaf} holds for all $i,i'\in \NN$. 
Assume that \eqref{e.ddeltaf}  holds for some $i'\in \NN$. It suffices to show that \eqref{e.ddeltaf} still holds when $i'+1$ is in the place of $i'$. 

First, 
by applying \eqref{e.ddfhg1} with $i-j$ and  $\langle D^{i'-j}F ,  g^{\otimes (i'-j)} \rangle_{\ch}$  in places of $F$ and $i$, we have  
\begin{eqnarray}
\Big\langle D \delta^{i-j}\Big( \langle D^{i'-j}F ,  g^{\otimes (i'-j)} \rangle_{\ch} h^{\otimes (i-j)} \Big), g\Big\rangle_{\ch} = 
  \delta^{i-j}\Big(   \langle D^{i'+1-j}F ,  g^{\otimes (i'+1-j)} \rangle_{\ch}  h^{\otimes i-j} \Big)
  \notag
\\
+
(i -j)  \langle h , g \rangle_{\ch} \cdot \delta^{i-j-1}\Big(  \langle D^{i'-j}F ,  g^{\otimes (i'-j)} \rangle_{\ch}   h^{\otimes (i-j-1)} \Big) . 
\label{e.dddfg}
\end{eqnarray}
Summing both sides of  \eqref{e.dddfg} in $j$ and then applying some elementary computations it is  easy to show that we have  
\begin{eqnarray}
\sum_{j=0}^{i'\wedge i} {i\choose j} {i'\choose j}  j!  \langle h , g \rangle_{\ch}^{j}\cdot \Big\langle D \delta^{i-j}\Big( \langle D^{i'-j}F ,  g^{\otimes (i'-j)} \rangle_{\ch} h^{\otimes (i-j)} \Big), g\Big\rangle_{\ch} 
\notag
\\
= \sum_{j=0}^{(i'+1)\wedge i} {i\choose j} {i'+1\choose j}  j!  \langle h , g \rangle_{\ch}^{j}\cdot \delta^{i-j}\Big( \langle D^{i'+1-j}F ,  g^{\otimes (i'+1-j)} \rangle_{\ch} h^{\otimes (i-j)} \Big). 
\label{e.ijijddf}
\end{eqnarray}
Note that by induction assumption the left-hand side of \eqref{e.ijijddf} is equal to 
\begin{eqnarray}
\langle D\langle D^{i'}\delta^{i}(F h^{\otimes i}), g^{\otimes i'}\rangle_{\ch^{\otimes i'}} , g\rangle_{\ch} = \langle D^{i'+1}\delta^{i}(F h^{\otimes i}), g^{\otimes (i'+1)}\rangle_{\ch^{\otimes (i'+1)}}. 
\notag
\end{eqnarray}
So relation \eqref{e.ijijddf} gives    \eqref{e.ddeltaf} with $i'+1$   in the place of $i'$. It follows that  \eqref{e.ddeltaf} holds for all $i'$ and $i$. This completes the proof. 
%
\end{proof}
The following result is a consequence of Lemma \ref{lemma.ddfhg}.
\begin{lemma}\label{lemma.dfdg}
Let $i,i'\in\NN$, $g,h\in\ch$  and $F,G\in \DD^{M, 2}$ for $M=\max(i,i')$. Then the following relation holds:
\begin{eqnarray}
\mE[\delta^{i}(F h^{\otimes i}) \cdot \delta^{i'}(Gg^{\otimes i'})] &=& \sum_{j=0}^{i\wedge i'}{i\choose j}{i'\choose j} j! \langle D^{ i'-j}F, g^{\otimes (i'-j)}\rangle_{\ch^{\otimes (i'-j)}}  
\notag
\\
&&\qquad\quad\cdot  \langle D^{ i-j}G, h^{\otimes (i- j)}\rangle_{\ch^{\otimes (i- j)}}  \cdot \langle h, g\rangle_{\ch}^{j} . \quad
\label{e.dfhdgg}
\end{eqnarray}


\end{lemma}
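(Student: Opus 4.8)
The plan is to obtain \eqref{e.dfhdgg} from two successive applications of the duality relation \eqref{e.ibp}, with Lemma \ref{lemma.ddfhg} inserted in between. Since Lemma \ref{lemma.ddfhg} (and several of the intermediate steps) require more Malliavin regularity than the bare hypothesis $F,G\in\DD^{M,2}$ provides, I would first reduce to the case where $F$ and $G$ are smooth. Indeed, by Lemma \ref{lemma.fdi} (taking $j=0$ in \eqref{e.dujb}) the map $F\mapsto\delta^{i}(Fh^{\otimes i})$ is bounded from $\DD^{i,2}$ to $L^{2}(\Omega)$, and likewise $G\mapsto\delta^{i'}(Gg^{\otimes i'})$, so the left-hand side of \eqref{e.dfhdgg} is a bilinear form in $(F,G)$ that is continuous on $\DD^{M,2}\times\DD^{M,2}$; the right-hand side, being a sum of expectations of products of the form $\langle D^{i'-j}F,g^{\otimes(i'-j)}\rangle_{\ch^{\otimes(i'-j)}}\cdot\langle D^{i-j}G,h^{\otimes(i-j)}\rangle_{\ch^{\otimes(i-j)}}$, is also bilinear and continuous in $(F,G)$ by Cauchy--Schwarz, each term having $L^{1}(\Omega)$-norm bounded by a constant times $\|F\|_{M,2}\|G\|_{M,2}\|h\|_{\ch}^{i}\|g\|_{\ch}^{i'}$. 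As smooth random variables are dense in $\DD^{M,2}$, it suffices to establish \eqref{e.dfhdgg} for $F,G\in S$; and for such $F,G$ one checks (using \eqref{e.gfdh} and induction on $i$) that $\delta^{i}(Fh^{\otimes i})\in S\subseteq\bigcap_{k,p}\DD^{k,p}$, so that all the operations below are legitimate.

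Granting that $F$ and $G$ are smooth, the first step is to apply \eqref{e.ibp} with $k=i'$, taking $u=Gg^{\otimes i'}$ (which belongs to $\text{Dom}(\delta^{i'})$ by the argument of Lemma \ref{lemma.fdi}) and the test random variable $\delta^{i}(Fh^{\otimes i})\in\DD^{i',2}$, which gives
\[
\mE\big[\delta^{i}(Fh^{\otimes i})\cdot\delta^{i'}(Gg^{\otimes i'})\big]=\mE\big[G\cdot\langle D^{i'}\delta^{i}(Fh^{\otimes i}),g^{\otimes i'}\rangle_{\ch^{\otimes i'}}\big].
\]
The second step is to expand the inner product $\langle D^{i'}\delta^{i}(Fh^{\otimes i}),g^{\otimes i'}\rangle_{\ch^{\otimes i'}}$ by means of relation \eqref{e.ddeltaf} of Lemma \ref{lemma.ddfhg}, which rewrites it as $\sum_{j=0}^{i\wedge i'}{i\choose j}{i'\choose j}j!\,\langle h,g\rangle_{\ch}^{j}\,\delta^{i-j}\big(\langle D^{i'-j}F,g^{\otimes(i'-j)}\rangle_{\ch^{\otimes(i'-j)}}h^{\otimes(i-j)}\big)$. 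The third step is to apply \eqref{e.ibp} a second time to each resulting term, now with $k=i-j$, $u=\langle D^{i'-j}F,g^{\otimes(i'-j)}\rangle_{\ch^{\otimes(i'-j)}}h^{\otimes(i-j)}$ (again in $\text{Dom}(\delta^{i-j})$ by Lemma \ref{lemma.fdi}) and test random variable $G\in\DD^{i-j,2}$, turning $\mE[G\cdot\delta^{i-j}(u)]$ into $\mE[\langle u,D^{i-j}G\rangle_{\ch^{\otimes(i-j)}}]$; finally \eqref{e.prod} factors this last inner product as $\langle D^{i'-j}F,g^{\otimes(i'-j)}\rangle_{\ch^{\otimes(i'-j)}}\cdot\langle D^{i-j}G,h^{\otimes(i-j)}\rangle_{\ch^{\otimes(i-j)}}$. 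Collecting terms yields \eqref{e.dfhdgg}.

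I expect the only genuine obstacle to be the bookkeeping about domains of $\delta^{k}$ and membership in the Sobolev spaces $\DD^{k,2}$: Lemma \ref{lemma.ddfhg} as stated asks for $F\in\DD^{i+i',2}$, which is stronger than $F\in\DD^{M,2}$, and the intermediate test random variables must sit in the appropriate spaces for \eqref{e.ibp} to apply. The density reduction in the first paragraph is what neutralizes this; an alternative, somewhat longer route would be to first upgrade Lemma \ref{lemma.ddfhg} to the statement that $\delta^{i}(Fh^{\otimes i})\in\DD^{i',2}$ already when $F\in\DD^{M,2}$, with $\|\delta^{i}(Fh^{\otimes i})\|_{i',2}\lesssim\|F\|_{M,2}\|h\|_{\ch}^{i}$, after which the three-step computation goes through verbatim without any approximation. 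Beyond that the argument is a routine double integration by parts; the coefficients ${i\choose j}{i'\choose j}j!$ arise exactly as the number of ways of contracting $j$ of the $i$ factors $h$ against $j$ of the $i'$ factors $g$, which is also what drives the combinatorics in \eqref{e.ddeltaf}.
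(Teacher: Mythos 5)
Your proof is correct and follows exactly the route the paper takes: reduce to smooth $F,G$, apply the duality \eqref{e.ibp}, expand $\langle D^{i'}\delta^{i}(Fh^{\otimes i}),g^{\otimes i'}\rangle_{\ch^{\otimes i'}}$ via Lemma \ref{lemma.ddfhg}, apply \eqref{e.ibp} again, and conclude by density — you merely spell out the continuity estimates behind the paper's one-line density argument. (Note your computation also delivers the statement with the expectation on the right-hand side, which is how \eqref{e.dfhdgg} must be read.)
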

\begin{proof}
When $F$ and $G$ are smooth random variables, 
relation \eqref{e.dfhdgg} can be shown by   applying integration by parts \eqref{e.ibp} and   relation \eqref{e.ddeltaf}, and then \eqref{e.ibp} again. By a density argument it is easy to show that \eqref{e.dfhdgg} also holds for $F,G\in\DD^{M,2}$. 
   \end{proof}

\subsection{Gaussian process and covariance function}\label{section.cov}

Let $x$ be a one-dimensional Gaussian process on $[0,T]$ and denote the covariance function $R(s,t) = \mE[x_{s}x_{t}]$, $s,t\in[0,T]$. Recall that the rectangular increment of $R$ is defined as:  
\begin{eqnarray}
R_{uv}^{st} := R(v,t)-R(u,t)-R(v,s)+R(u,s), \qquad u,v,s,t\in[0,T]: u\leq v, s\leq t.
\notag
\end{eqnarray}
  Let $I_{i}=[a_{i},b_{i}]  $, $i=1,2$ be two sub-intervals of $[0,T]$. Let $\cp_{1} = \{a_{1}=t_{0}<\cdots<t_{n_{1}}=b_{1}\}$ and $\cp_{2} = \{a_{2}=s_{0}<\cdots<s_{n_{2}}=b_{2}\}$, $n_{1},n_{2}\in\NN$   be   partitions of $I_{i}$, $i=1,2$, respectively. 
The $\rho$-variation of a two-variable function $R$ on the domain $I_{1}\times I_{2}$ is defined by: 
\begin{eqnarray}
|R|_{\rho\text{-var},I_{1}\times I_{2}}  = \sup_{\cp_{1}\times \cp_{2}} \sum_{k =0}^{n_{1}-1}\sum_{k'=0}^{n_{2}-1}\big|R_{t_{k}t_{k +1}}^{s_{k'}s_{k'+1}}\big|^{\rho},
\notag
\end{eqnarray}
where the supremum is taken over all grid partitions of $I_{1}\times I_{2}$ (see e.g. \cite{FV10}). 
Throughout the paper, we define  $\ch$ to  be the completion of the space of indicator functions with respect to the inner product $\langle \mathbf{1}_{[u,v]},  \mathbf{1}_{[s,t]}  \rangle_{\ch} =R_{uv}^{st}$ and let the Sobolev spaces $\DD^{k,p}$ be defined as in Section \ref{section.m}.

 We will need the following  elementary   lemma: 
\begin{lemma}\label{e.alk}
The following relation holds for all $s\leq t$:       
\begin{eqnarray}
R_{0s}^{st}
= - \frac12 R_{st}^{st}+\frac12 (R_{0t}^{0t}-R_{0s}^{0s}).
\label{e.alk1}
\end{eqnarray} 
\end{lemma}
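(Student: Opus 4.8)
The plan is to reduce \eqref{e.alk1} to a polarization identity for the inner product on $\ch$. First I would record that, by the definition of $\ch$ as the completion of indicator functions under $\langle \mathbf{1}_{[u,v]}, \mathbf{1}_{[s,t]}\rangle_{\ch} = R_{uv}^{st}$, each rectangular increment appearing in the statement is an inner product of increment indicators: $R_{0s}^{st} = \langle \mathbf{1}_{[0,s]}, \mathbf{1}_{[s,t]}\rangle_{\ch}$, $R_{st}^{st} = \|\mathbf{1}_{[s,t]}\|_{\ch}^{2}$, $R_{0t}^{0t} = \|\mathbf{1}_{[0,t]}\|_{\ch}^{2}$ and $R_{0s}^{0s} = \|\mathbf{1}_{[0,s]}\|_{\ch}^{2}$.

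Next, since $s\leq t$ we have the decomposition $\mathbf{1}_{[0,t]} = \mathbf{1}_{[0,s]} + \mathbf{1}_{[s,t]}$ as elements of $\ch$. Expanding $\|\mathbf{1}_{[0,t]}\|_{\ch}^{2}$ by bilinearity and symmetry of $\langle\cdot,\cdot\rangle_{\ch}$ gives
\[
R_{0t}^{0t} = R_{0s}^{0s} + 2\,R_{0s}^{st} + R_{st}^{st},
\]
and solving for $R_{0s}^{st}$ is precisely \eqref{e.alk1}.

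If one prefers to avoid invoking the Hilbert-space structure, the same identity follows by a direct computation: expand each of the four rectangular increments via $R_{uv}^{st} = R(v,t) - R(u,t) - R(v,s) + R(u,s)$, use the symmetry $R(a,b)=R(b,a)$ coming from $R(a,b)=\mE[x_{a}x_{b}]$, and collect terms; the $R(t,t)$ and $R(0,0)$ contributions cancel, and the remaining terms assemble into $R(s,t) - R(0,t) - R(s,s) + R(0,s) = R_{0s}^{st}$.

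There is no genuine obstacle here — the statement is a polarization identity and the only thing to watch is the bookkeeping of signs and of which variable occupies which slot of the rectangular increment. I would present the Hilbert-space version, since it is the shortest and makes transparent the interpretation of $R_{0s}^{st}$ in terms of the variance of the increments (and hence of the diagonal $R(t,t)$).
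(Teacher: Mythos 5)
Your proof is correct and is essentially the paper's argument: the paper expands $R_{0t}^{0t}=\mE[(x_{t}-x_{s}+x_{s})^{2}]=R_{st}^{st}+R_{0s}^{0s}+2R_{0s}^{st}$ and solves for $R_{0s}^{st}$, which is exactly your expansion of $\|\mathbf{1}_{[0,t]}\|_{\ch}^{2}$ under the isometry $\mathbf{1}_{[u,v]}\mapsto x_{v}-x_{u}$. The bilinear expansion and the direct bookkeeping check are both fine, so nothing is missing.
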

\begin{proof}
Note that   
\begin{eqnarray}
 R_{0t}^{0t} = \mE[x_{t}^{2}] = \mE[(x_{t}-x_{s}+x_{s})^{2}] = R_{st}^{st}+R_{0s}^{0s}+2R_{0s}^{st}
.
\notag
\end{eqnarray} 
  The lemma  then follows.
\end{proof}

 \subsection{Definition of $p$-variation}
 Let $x $ be a continuous path on $  [0,T]$. 
Let $\cp: s=t_{0}<\cdots<t_{n}=t$ be a partition on $[s,t]$. 
 Recall that the \emph{$p$-variation of $x$ on $[s,t]$} is defined as 
 \begin{eqnarray}
 \|x\|_{p\text{-var},[s,t]}:= \Big(\sup_{\cp} \sum_{k=0}^{n-1}|x^{1}_{t_{k}t_{k+1}}|^{p}\Big)^{1/p}   , \quad p\geq1,
\notag
\end{eqnarray}
 where the supremum is taken over all partitions $\cp$ on $[s,t]$. We say that  $x$   has finite $p$-variation if $ \|x\|_{p\text{-var},[0,T]}<\infty$. 

 \begin{remark}\label{remark.young}
Let $x$ and $y$ be continuous paths on $[0,T]$.    It is well-known (see \cite{young1936inequality}) that 
if $x$ and $y$ are of finite $p$- and $q$-variations for $p,q\geq 1$, respectively, such that $\frac{1}{p}+\frac{1}{q}>1$, 
then
 the Riemann sum $\sum_{k=0}^{n-1}y_{t_{k}}(x_{t_{k+1}}-x_{t_{k}})$ converges to the Young integral $\int_{0}^{T}y_{t}dx_{t}$ as $|\cp|\to0$.  
\end{remark}

\begin{remark}\label{remark.p}
It is well-known that for a Gaussian process $x$ such that its  covariance function $R$   is of finite $\rho$-variation for $\rho\geq1$,     the sample paths of $x$ are of finite $p$-variation  almost surely for any   $p>2\rho$; see e.g. \cite[Proposition 15.22]{FV10}.  
\end{remark}

\section{Riemann-Skorohod integral for one-dimensional Gaussian process}\label{section.1d}

In this section we   introduce the Skorohod-type Riemann integral and consider its relation to the rough integral.  
Throughout the section we assume that $x$ is a one-dimensional Gaussian process satisfying Hypothesis \ref{hyp.1}.  We start by  making some remarks about the hypothesis. 
\begin{remark}\label{remark.control}
 Assume that $R$ is a  continuous  two-variable  function on $[0,T]^{2}$ with finite $\rho$-variation for $\rho\geq 1$. Denote by $I_{1}$ and $I_{2}$ two sub-intervals of $[0,T]$. We recall a result in   \cite{friz2011note}, which states that  for     $\tilde{\rho}=\rho+\ep$ and   $\ep>0$  there exists a 2D control   (that is, a two-variable function which is continuous, zero on degenerate rectangles, and    super-additive): $I_{1}\times I_{2}\mapsto \omega(I_{1}\times I_{2})\in \RR_{+}$ such that 
 \begin{eqnarray}
|R|_{\tilde{\rho}\text{-var}, I_{1}\times I_{2}}^{\tilde{\rho} } \leq\omega(I_{1}\times I_{2}). 
\label{e.control2}
\end{eqnarray}

Given that Hypothesis \ref{hyp.1} holds, it is easy to see that for $\ep$ sufficiently small,  Hypothesis \ref{hyp.1} still holds if $\rho $ is replaced by $\tilde{\rho}=\rho+\ep$.  For the sake of conciseness, we will abuse   notation and denote this  $\tilde{\rho}$ as $\rho$ in the remainder of the paper. In particular, we will  write \eqref{e.control2} as:
\begin{eqnarray}
|R|_{ {\rho}\text{-var}, I_{1}\times I_{2}}^{ {\rho} } \leq\omega(I_{1}\times I_{2}). 
\label{e.control3}
\end{eqnarray}

\end{remark}
\begin{remark}\label{remark.rho1}
Since $R$ is of finite $\rho$-variation, the function $R(t,t)$, $t\in[0,T]$ is also of finite $\rho$-variation. This shows that condition (1) in Hypothesis \ref{hyp.1}  implies that condition (2) holds true with $\rho ' =\rho$. 
 
 To see that $R(t,t)$, $t\in[0,T]$ has finite $\rho$-variation,   we can write 
\begin{eqnarray}
R(t,t)-R(s,s) = R_{st}^{0t}+R_{0s}^{st} . 
\notag
\end{eqnarray}
Let $\cp: 0=t_{0}<\cdots<t_{n}=T$ be a partition of $[0,T]$. Then  we have   
 \begin{eqnarray}
\sum_{k=0}^{n-1} |R(t_{k+1},t_{k+1})-R(t_{k},t_{k}) |^{\rho}\leq 2^{\rho-1}\sum_{k=0}^{n-1} \big(|R_{t_{k}t_{k+1}}^{0t_{k+1}} |^{\rho}+|R_{0t_{k}}^{t_{k}t_{k+1}}|^{\rho}\big)
\\
\leq 2^{\rho-1}\sum_{k=0}^{n-1} \big(\omega([t_{k},t_{k+1}]\times [0,t_{k+1}])+ \omega( [0,t_{k}]\times [t_{k},t_{k+1}])\big),\notag  
\end{eqnarray}
where we have used relation \eqref{e.control3}. 
Now by super-additivity of $\omega$  we have
\begin{eqnarray}
\sum_{k=0}^{n-1} |R(t_{k+1},t_{k+1})-R(t_{k},t_{k}) |^{\rho}\leq 2^{\rho-1}   \omega([0,T]^{2}). 
\notag
\end{eqnarray}
Taking supremum over all partitions  $\cp$ of $[0,T]$ we obtain that the $\rho$-variation of $R(t,t)$, $t\in[0,T]$ is bounded by $ 2^{\rho-1}   \omega([0,T]^{2})$.   
\end{remark}
\begin{remark}\label{remark.rho2}
It is easy to see that under condition (2) in Hypothesis \ref{hyp.1} we can always take  $\rho'$ such that  $\rho'<2$. We will make such choice of $\rho'$ in the following discussions.  
\end{remark}

Denote   $x^{1}_{st}=x_{t}-x_{s} $ and $x^{i}_{st} =(  x^{1}_{st})^{i}/i!$, $i\in\NN$, and   $\ell=[2\rho]$. Let $\cp : 0=t_{0}<\cdots<t_{n}=T$ be a partition on $[0,T]$. Consider the compensated Riemann-Stieltjes sum: 
\begin{eqnarray}
\cj (f, dx, \cp ): =
\sum_{k=0}^{n-1} \sum_{i=1}^{\ell} f^{(i-1)}(t_{k}, x_{t_{k}})x^{i}_{t_{k}t_{k+1}} ,  
\label{e.rs}
\end{eqnarray}
where we    denote   $f^{(i)}=\frac{\partial^{i}f}{\partial x^{i}} $, $i\in\NN$.  
According to Lemma \ref{lemma.strat} (with $d=1$ and $\partial f $ replaced by $f$) and   Remark \ref{remark.p}, if   $f $ has bounded derivatives in $x$ up to order $\ell$ and continuous first-order derivative in $t$, then       we have the   convergence:  
\begin{eqnarray}
\cj (f, dx, \cp) \to \int_{0}^{T} f(t,x_{t})dx_{t}  
\label{e.strati}
\end{eqnarray}
  almost surely   as $|\cp|\to0$.
  
 Following is our definition of the   Skorohod-type Riemann integral. Recall that we  denote   $\be_{k} = \mathbf{1}_{[t_{k}, t_{k+1}]}$ for $k\in\NN$.  

 \begin{Def}\label{def.delta}
Let $f: (t,x)\mapsto f(t,x)$ be a continuous function from $  [0,T]\times \RR $ to $\RR$. 
Let $\cp : 0=t_{0}<\cdots<t_{n}=T$ be a partition on $[0,T]$. Consider  the following Skorohod-type Riemann sum:
 \begin{eqnarray}
\cj  (f, \delta x, \cp):=  \sum_{k=0}^{n-1}  \sum_{i=1}^{[\rho ] }
\frac{1}{   {i!}}
\delta^{ i}\big( f^{(i-1)}
(t_{k},x_{t_{k}})  
\be_{k}^{\otimes i }
\big). 
\label{e.skorr}
\end{eqnarray}
We define  the Skorohod-type integral: 
 \begin{eqnarray}
\int_{0}^{T} f(t,x_{t})\delta x_{t} :=\lim_{|\cp|\to0}\cj  (f, \delta x, \cp), 
\label{e.skor}
\end{eqnarray}
provided that the right-hand side of  \eqref{e.skorr} is well-defined and that the limit in \eqref{e.skor}   exists in probability. 
\end{Def} 
 
 We also define the following  Riemann sum:  
\begin{eqnarray}
 \cj  (f^{(1)}, dR, \cp)  = \sum_{k=0}^{n-1} f^{(1)}(t_{k}, x_{t_{k}}) \big( R(t_{k+1}, t_{k+1}) - R(t_{k}, t_{k})  \big) . 
\label{e.fdr1}
\end{eqnarray}
 Recall that the Gaussian process $x$ is of finite $p$-variation for $p>2\rho$. For \eqref{e.fdr1} to be convergent it suffices to assume that the diagonal $R(t,t)$, $t\in[0,T]$ is of finite $\rho'$-variation for $\frac{1}{\rho'}+\frac{1}{p}>1$, or equivalently, $\frac{1}{\rho'}+\frac{1}{2\rho}>1$.   
 

Following is   our main result in this section:
 
\begin{thm}\label{thm.dxdelta}
Let $f : (t,x)\in[0,T]\times \RR\to \RR$ be a continuous function with  bounded derivatives in $x$ up to order $2\ell-1$ and continuous first-order derivative in $t$.  
Let $x$ be a one-dimensional    Gaussian process such  that       Hypothesis \ref{hyp.1} holds.   
 Then the limit \eqref{e.strati} exists and   we have the following representation:
 \begin{eqnarray}
\int_{0}^{T} f(t,x_{t})dx_{t}   =\lim_{|\cp|\to0}  \lp \cj  (f, \delta x, \cp) + \frac12 \cj  (f^{(1)}, dR, \cp)  \rp. 
\label{e.conv}
\end{eqnarray}
Assume further that  $f^{(1)}=\frac{\partial f}{\partial x} $ is   Lipschitz in $(t,x)$, and that one of the following holds:

\noindent (1) $\rho <3/2$;

\noindent (2)       $\frac{1}{2\rho}+\frac{1}{\rho'}>1$.  

\noindent Then   both  \eqref{e.skor} and  \eqref{e.fdr1} converge as $|\cp|\to0$, and we have the      Stratonovich-Skorohod integral formula: 
 \begin{eqnarray}
 \int_{0}^{T} f(t,x_{t})d  x_{t}  = \int_{0}^{T} f(t,x_{t})\delta x_{t} +\frac12 \int_{0}^{T}\frac{\partial f}{\partial x} (t,x_{t})dR(t,t). 
\label{e.dxdelta}
\end{eqnarray}

\end{thm}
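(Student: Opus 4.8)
The plan is to establish the representation \eqref{e.conv} first and then read off the conversion formula \eqref{e.dxdelta}. Since $\cj(f,dx,\cp)\to\int_0^T f(t,x_t)dx_t$ almost surely is already granted by Lemma \ref{lemma.strat} together with Remark \ref{remark.p}, all of \eqref{e.conv} reduces to proving that
\begin{eqnarray}
\crr_\cp:=\cj(f,dx,\cp)-\cj(f,\delta x,\cp)-\tfrac12\,\cj(f^{(1)},dR,\cp)\longrightarrow 0\qquad\text{in }L^{2}(\Omega)\ \text{as }|\cp|\to0 .
\notag
\end{eqnarray}

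First I would rewrite $\cj(f,dx,\cp)$ from \eqref{e.rs} term by term on each subinterval $[t_k,t_{k+1}]$. Writing $\sigma_k^{2}:=\|\be_k\|_{\ch}^{2}=R_{t_k t_{k+1}}^{t_k t_{k+1}}$ and combining \eqref{e.dkh} (applied to the unit vector $\be_k/\sigma_k$) with the Hermite decomposition of monomials \eqref{e.xah}, one obtains
\begin{eqnarray}
x^{i}_{t_k t_{k+1}}=\sum_{0\le 2q\le i}\frac{1}{2^{q}\,q!\,(i-2q)!}\,\sigma_k^{2q}\,\delta^{\,i-2q}\big(\be_k^{\otimes(i-2q)}\big),\qquad i=1,\dots,\ell .
\notag
\end{eqnarray}
Substituting this into \eqref{e.rs}, then pushing each factor $f^{(i-1)}(t_k,x_{t_k})$ inside the divergence operators with Lemma \ref{lemma.fdi} (using $\langle D^{j}f^{(i-1)}(t_k,x_{t_k}),\be_k^{\otimes j}\rangle_{\ch^{\otimes j}}=f^{(i-1+j)}(t_k,x_{t_k})\,(R_{0t_k}^{t_k t_{k+1}})^{j}$), and finally substituting $\sigma_k^{2}=-2R_{0t_k}^{t_k t_{k+1}}+\big(R(t_{k+1},t_{k+1})-R(t_k,t_k)\big)$, which is \eqref{e.alk1} rearranged, expresses $\cj(f,dx,\cp)$ as a finite linear combination of terms $\delta^{m}\big(f^{(r)}(t_k,x_{t_k})(R_{0t_k}^{t_k t_{k+1}})^{a}\sigma_k^{2b}\be_k^{\otimes m}\big)$ and $f^{(r)}(t_k,x_{t_k})(R_{0t_k}^{t_k t_{k+1}})^{a}\sigma_k^{2b}\big(R(t_{k+1},t_{k+1})-R(t_k,t_k)\big)$ with $0\le m\le\ell$ and $r\le 2\ell-1$. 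Collecting the main pieces: the order-$m$ divergences with $1\le m\le[\rho]$ and no surplus factor ($a=b=0$) arise only from the monomial $i=m$ with coefficient $1/m!$, and sum to $\cj(f,\delta x,\cp)$; the diagonal-increment piece generated by the $i=2$ monomial is exactly $\tfrac12\cj(f^{(1)},dR,\cp)$; and the two order-zero contributions proportional to $\sum_k f^{(1)}(t_k,x_{t_k})R_{0t_k}^{t_k t_{k+1}}$ — one from $i=1$ and one from the $\sigma_k^{2}$-part of $i=2$ — cancel. Everything left over (divergences of order $m\le[\rho]$ with $a+b\ge1$, all divergences of order $[\rho]<m\le\ell$, and all order-zero terms with $a+2b\ge 2$) makes up $\crr_\cp$, and each such summand carries at least one surplus small factor — a power of $R_{0t_k}^{t_k t_{k+1}}$, a power of $\sigma_k^{2}$, a diagonal increment, or an extra divergence order — beyond what the Skorohod and diagonal sums require.

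The core of the argument is then to estimate $\|\crr_\cp\|_{L^{2}}$. For a generic summand $\Psi_k=\delta^{m}(g_k\be_k^{\otimes m})$ of $\crr_\cp$ one writes $\mathbb E[(\sum_k\Psi_k)^{2}]=\sum_{k,k'}\mathbb E[\Psi_k\Psi_{k'}]$ and expands each expectation with Lemma \ref{lemma.dfdg}; the result is a sum of products of factors of the form $\langle\be_k,\be_{k'}\rangle_{\ch}=R_{t_k t_{k+1}}^{t_{k'}t_{k'+1}}$, $R_{0t_k}^{t_{k'}t_{k'+1}}$, $\sigma_k^{2}$ and $R(t_{k+1},t_{k+1})-R(t_k,t_k)$ (together with their $k'$-analogues), times uniformly bounded Sobolev norms $\|f^{(r)}(t_k,x_{t_k})\|_{\ell,2}$ of the $f$-factors supplied by Lemmas \ref{lemma.fdi}--\ref{lemma.dfdg} — this is the place where the hypothesis that $f$ has bounded $x$-derivatives up to order $2\ell-1$ is used. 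Bounding each rectangular increment by the $\rho$-variation control $\omega$ of \eqref{e.control3} via $|R_{t_k t_{k+1}}^{t_{k'}t_{k'+1}}|\le\omega([t_k,t_{k+1}]\times[t_{k'},t_{k'+1}])^{1/\rho}$ and the like, each diagonal increment by a $\rho'$-variation control (with $\rho'<2$ by Remark \ref{remark.rho2}), and reducing the cross-increments $R_{0t_k}^{t_{k'}t_{k'+1}}$ to one-dimensional marginal controls $\omega(I_k\times[0,T])$, $\omega([0,T]\times I_{k'})$ by monotonicity of $\omega$, one reduces to bounding sums $\sum_{k,k'}\prod_j\omega(\cdot)^{\alpha_j}$. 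For every term the total control exponent turns out to be strictly larger than $1$: this is forced by the truncation orders ($\ell=[2\rho]$ and $[\rho]$: an order-$m$ divergence with $m>\rho$ produces more than $\rho$ factors, hence exponent $>1$, while the surplus factors carry the extra weight when $m\le[\rho]$) together with Hypothesis \ref{hyp.1} ($\rho<2$, or $\tfrac1\rho+\tfrac1{\rho'}>1$). Super-additivity of $\omega$ (Remark \ref{remark.control}) — applied after a Cauchy--Schwarz (or H\"older) splitting in $(k,k')$ when a single control straddles the two grids — then bounds such a sum by a power of $\omega([0,T]^{2})$ times an oscillation factor $\big(\sup_{k,k'}\omega(I_k\times I_{k'})\big)^{\theta}$ with $\theta>0$, which tends to $0$ with $|\cp|$ by continuity of $\omega$. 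This exponent bookkeeping and the accompanying case analysis are the main technical obstacle; once they are in hand, $\|\crr_\cp\|_{L^{2}}\to0$ and \eqref{e.conv} follows.

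Finally, for \eqref{e.dxdelta}, assume in addition that $f^{(1)}$ is Lipschitz in $(t,x)$ and that (1) or (2) holds. Since $x$ has finite $p$-variation almost surely for every $p>2\rho$ (Remark \ref{remark.p}) and $f^{(1)}$ is Lipschitz, $t\mapsto f^{(1)}(t,x_t)$ has finite $p$-variation; moreover $R(t,t)$ has finite $\rho'$-variation, with $\rho'=\rho$ under (1) (Remark \ref{remark.rho1}). In either case one can pick $p>2\rho$ so close to $2\rho$ that $\tfrac1p+\tfrac1{\rho'}>1$: under (2) this is the assumed inequality $\tfrac1{2\rho}+\tfrac1{\rho'}>1$, and under (1) it is $\tfrac1{2\rho}+\tfrac1\rho=\tfrac{3}{2\rho}>1\Leftrightarrow\rho<3/2$. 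Hence by Young's theorem (Remark \ref{remark.young}) $\cj(f^{(1)},dR,\cp)$ converges to the Young integral $\int_0^T f^{(1)}(t,x_t)\,dR(t,t)$ as $|\cp|\to0$. Together with the already-established convergence of $\cj(f,dx,\cp)$ and with $\crr_\cp\to0$, this shows that $\cj(f,\delta x,\cp)$ also converges in probability, so that $\int_0^T f(t,x_t)\delta x_t$ in \eqref{e.skor} is well-defined; passing to the limit in the definition of $\crr_\cp$ then yields \eqref{e.dxdelta}.
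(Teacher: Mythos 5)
Your skeleton is the same as the paper's proof of Theorem \ref{thm.dxdelta}: Hermite decomposition of the monomials $x^{i}_{t_k t_{k+1}}$ via \eqref{e.xah}--\eqref{e.dkh}, Lemma \ref{lemma.fdi} to move $f^{(i-1)}(t_k,x_{t_k})$ inside the divergences (producing the factors $\al_k=\langle\mathbf{1}_{[0,t_k]},\be_k\rangle_{\ch}=R_{0t_k}^{t_kt_{k+1}}$), the identity of Lemma \ref{e.alk} to trade $\si_k^{2}$ against $\al_k$ and the diagonal increment, second moments via Lemma \ref{lemma.dfdg}, H\"older and the 2D control of \eqref{e.control3}, and the final Young-integral step (which you handle correctly, as does your deduction of \eqref{e.dxdelta} from \eqref{e.conv}). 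The gap is in your treatment of the leftover terms. You assert that every summand of $\crr_\cp$ carries enough surplus factors that ``the total control exponent turns out to be strictly larger than $1$,'' but this fails as soon as $\rho\ge 2$, i.e.\ precisely in case (2) of Hypothesis \ref{hyp.1}, which is the main case the theorem is designed to cover. Concretely, your remainder contains, e.g., $\sum_k \delta\big(f^{(2)}(t_k,x_{t_k})\be_k\big)\,\al_k$ (divergence order $m=1$, one surplus factor) and the order-zero term $\sum_k f^{(3)}(t_k,x_{t_k})\,\al_k^{2}$. For these the control bound per pair $(k,k')$ is of the type $\omega(I_k\times I_{k'})^{1/\rho}\,\omega([0,T]\times I_k)^{1/\rho}\,\omega([0,T]\times I_{k'})^{1/\rho}$, so any H\"older split needs $\tfrac{1}{\rho}+\tfrac{1}{\rho}>1$, i.e.\ $\rho<2$; and these terms are genuinely not negligible: for fractional Brownian motion with small Hurst parameter and a uniform partition one has $|\al_k|\asymp n^{-2H}$ and the $L^{2}$ norm of such a sum does not tend to $0$ (for $H$ small it even diverges). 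No size estimate can rescue this.

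These terms vanish only through an exact algebraic cancellation across different monomial orders $i$, which is the content of case (v) (Step 8) in the paper's proof: for each fixed divergence order $L$ and surplus level $\tau\ge 1$ with $L+\tau\le\rho$, the coefficients add up to $\sum_{q+j=\tau}\tfrac{1}{2^{q}q!j!}\big(-\tfrac12\big)^{j}=\tfrac{1}{\tau!}\big(\tfrac12-\tfrac12\big)^{\tau}=0$; in your mirrored bookkeeping (substituting $\si_k^{2}=-2\al_k+\big(R(t_{k+1},t_{k+1})-R(t_k,t_k)\big)$) the same identity reads $\sum_{q+j=\tau}\tfrac{(-1)^{q}}{q!\,j!}=0$, e.g.\ the $m=1$, $a=1$ term above is produced with coefficient $+1$ by $i=2$ and $-1$ by the $\si_k^{2}$-part of $i=3$. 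Your proposal identifies only the lowest instance of this cancellation (the $i=1$ versus $i=2$ order-zero cancellation, which is all that is needed when $\rho<2$) and relegates everything else to the control estimates; consequently the argument as written proves \eqref{e.conv} only under condition (1) of Hypothesis \ref{hyp.1} and does not establish the theorem for $\rho\ge2$. To repair it you must carry out the coefficient bookkeeping for all $(L,\tau)$ with $\tau\ge1$ and $L+\tau\le\rho$, exactly as in Steps 2--3 and 8 of the paper's proof, and reserve the size estimates for the terms with a diagonal-increment factor or with $L+\tau>\rho$.
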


 \begin{proof}
 The proof is divided into several steps. We start by   deriving some useful algebraic results about \eqref{e.rs} (from Step 1-3).  

%
%

\noindent\emph{Step 1: Chaos decomposition of the compensated Riemann sum.}
 Denote $\si_{k}=\mE[|x^{1}_{t_{k}t_{k+1}}|^{2}]^{1/2}$ and  $X_{k} =   x^{1}_{t_{k}t_{k+1}}/\si_{k} = \delta( \be_{k} /\si_{k})$, $k=0,\dots,n-1$. It is clear that $X_{k}$ is a standard normal random variable.  
 Recall that $x^{i}_{t_{k}t_{k+1}} =(  x^{1}_{t_{k}t_{k+1}})^{i}/i!$. So we have: 
\begin{eqnarray}
x^{i}_{t_{k}t_{k+1}}  = \frac{1}{{i!}}X_{k}^{i}\cdot \si_{k}^{i}. 
\label{e.xk}
\end{eqnarray}
 By applying  \eqref{e.xah} and then \eqref{e.dkh}  to \eqref{e.xk} we obtain:  
\begin{eqnarray}
x^{i}_{t_{k}t_{k+1}} 
= \frac{1}{{i!}}\sum_{0\leq 2q\leq i} a^{i}_{i-2q} \delta^{i-2q}( \be_{k}^{\otimes (i-2q)} )  \si_{k}^{-(i-2q)}\cdot \si_{k}^{i}, 
\label{e.xhermite}
\end{eqnarray}
where $ a^{i}_{i-2q}$ is given in \eqref{e.xah}.
Substituting \eqref{e.xhermite} into \eqref{e.rs}     we get: 
%
%
%
\begin{eqnarray}
\cj(f,dx, \cp) = \sum_{k=0}^{n-1} \sum_{i=1}^{\ell}\sum_{0\leq 2q\leq i}   \frac{1}{{i!}} a^{i}_{i-2q}    f^{(i-1)}(t_{k}, x_{t_{k}}) 
 \delta^{i-2q}( \be_{k}^{\otimes (i-2q)} ) \cdot \si_{k}^{2q} . 
\label{e.xhq}
\end{eqnarray}



Apply  Lemma \ref{lemma.fdi} with $F = f^{(i-1)}(t_{k}, x_{t_{k}}) $, $ h = \be_{k}$ and with $i$ replaced by $i-2q$. Then relation \eqref{e.hdfh} becomes:  
\begin{eqnarray}
\langle D^{j}F, h^{\otimes j}\rangle_{\ch^{\otimes j}} \cdot h^{\otimes (i-2q-j)}   = f^{(i+j-1)}
(t_{k}, x_{t_{k}})  
\be_{k}^{\otimes (i-2q-j)} 
\cdot \al_{k}^{j}\, , 
\notag
\end{eqnarray}
where  we denote $\al_{k} = \langle \mathbf{1}_{[0,t_{k}]}, \be_{k} \rangle_{\ch}$.
 From    \eqref{e.gfdh} we also have: 
\begin{eqnarray}
f^{(i-1)}(t_{k}, x_{t_{k}}) 
 \delta^{i-2q}( \be_{k}^{\otimes (i-2q)} )   =
  \sum_{j=0}^{i-2q}  
\binom{i-2q}{j}
\delta^{ i-2q-j}\big( f^{(i+j-1)}
(t_{k}, x_{t_{k}})  
\be_{k}^{\otimes (i-2q-j)}
\big) 
\cdot \al_{k}^{j}\,.
\label{e.xhq2}
\end{eqnarray} 
Reporting \eqref{e.xhq2} to \eqref{e.xhq}  we   obtain:  
\begin{eqnarray}
\cj (f, dx, \cp ) =    \sum_{i=1}^{\ell}        \sum_{0\leq 2q\leq i} \sum_{j=0}^{i-2q }  
 \cn({i,q,j})\, ,   
\label{e.cjd}
\end{eqnarray}
where
\begin{eqnarray}
\cn({i,q,j}) &:=&  \sum_{k=0}^{n-1}  
 \si_{k}^{ 2q} \frac{a^{i}_{i-2q}}{   {i!}}\binom{i-2q}{j}
  \cdot\delta^{ i-2q-j}\big( f^{(i+j-1)}
(t_{k}, x_{t_{k}})  
\be_{k}^{\otimes (i-2q-j)}
\big)
 \cdot \al_{k}^{j}\, . 
\label{e.cm}
\end{eqnarray}

\noindent\emph{Step 2: Some index sets.} 
We introduce  the following useful  index sets:  
\begin{eqnarray}
\ca &=& \{(i,q,j)\in\bar{\NN}^{3}: 1\leq i\leq \ell,\,   2q \leq i,   \,   j \leq i-2q\}, 
\label{e.aset}
\\
\ca_{L } &=& \{(i,q,j)\in \ca:   i-2q-j=L \}, 
\qquad \qquad\qquad  L=0,\dots, \ell, 
\notag
\\
\ca_{L\tau} &=& \{(i,q,j)\in \ca_{L}: q+j=\tau \} , 
\qquad \qquad \qquad\qquad \tau=0, \dots,    \ell-L,
\label{e.calt1}
\\
\ci &=& \{(L, \tau)\in \bar{\NN}^{2}:   L\leq \ell,   \tau\leq \ell-L\} .
\label{e.ci}  
\end{eqnarray}
It is clear that we have the relation: 
\begin{eqnarray}
\ca= \bigcup_{L=0}^{\ell}\ca_{L}= \bigcup_{L=0}^{\ell}\bigcup_{\tau=0}^{\ell-L} \ca_{L\tau}= \bigcup_{(L, \tau)\in \ci} \ca_{L\tau}\,. 
\label{e.caca}
\end{eqnarray}
  From  \eqref{e.cjd}   we can thus write   
\begin{eqnarray}
\cj (f, dx, n ) 
=\sum_{(i,q,j)\in \ca }  \cn({i,q,j})=
\sum_{(L, \tau)\in \ci} \sum_{(i,q,j)\in\ca_{L\tau}}\cn({i,q,j})\,.   
 \label{e.cjd1}
\end{eqnarray}

Since $i-2q-j=L$ and $q+j=\tau$, we have \begin{eqnarray}
i+j=L+2\tau. 
\label{e.ijL}
\end{eqnarray}
 On the other hand, recall   that $ a^{i}_{i-2q}$ is defined  in~\eqref{e.xah}. This gives:  
 \begin{eqnarray}
\frac{a^{i}_{i-2q}}{   {i!}}\binom{i-2q}{j} = \frac{1}{2^{q}q!j!L!}. 
\label{e.aij}
\end{eqnarray}
 Substituting   relations \eqref{e.ijL}-\eqref{e.aij} into \eqref{e.cjd1} we obtain:   
\begin{eqnarray}
\cj (f, dx, \cp )   =
\sum_{(L, \tau)\in \ci} \sum_{(i,q,j)\in\ca_{L\tau}}\cm({L,\tau,i,q,j}) \,, 
\label{e.jfdxn}
\end{eqnarray}

where    
\begin{eqnarray}
\cm({L,\tau,i,q,j})
&=&   
    \sum_{k=0}^{n-1}  
\si_{k}^{ 2q} \frac{1}{2^{q}q!j!L!}
   \cdot\delta^{ L}\big( f^{(L+2\tau-1)}
(t_{k}, x_{t_{k}})  
\be_{k}^{\otimes L}
\big)\cdot \al_{k}^{j} \, .
\label{e.cmalt}
\end{eqnarray}

In the following we   make some observations about the  index sets. 
First,   by the definition of $\ca_{L}$ it is   easy to see that  we have
\begin{eqnarray}
\ca_{L} = \{(i,q,j)\in\bar{\NN}^{3}: 1\leq i\leq \ell ,\,    i-2q-j=L\}  , 
\qquad L=0,\dots, \ell. 
\label{e.cal}
\end{eqnarray}
From  \eqref{e.cal} one can show that for   $\tau: 0\leq 2\tau \leq   \ell-L$ such that $(L,\tau)\neq (0,0)$   we have
\begin{eqnarray}
\ca_{L\tau} = \{ (L+\tau,0,\tau), (L+\tau+1, 1, \tau-1),\dots, (L+2\tau, \tau, 0) \}. 
\label{e.calt}
\end{eqnarray}
Indeed, it is easy to verify that the elements in the right-hand side of \eqref{e.calt} belongs to~\eqref{e.calt1}. Note that the condition $0\leq 2\tau \leq   \ell-L$ is required to make sure that the elements satisfy the condition $i\leq \ell$ in \eqref{e.cal}. On the other hand, using relation   \eqref{e.ijL} and $i-2q-j=L$ it is clear that \eqref{e.calt1} is contained in the set \eqref{e.calt}. 
  
In the special case when $\tau=0$ and    $L=1,\dots, \ell$, relations \eqref{e.cal}-\eqref{e.calt} show that   $\ca_{L0}=\{(L, 0, 0)\}$  and  $\ca_{\ell}=\ca_{\ell0}=\{(\ell, 0,0)\}$. Note also  that since $i\geq 1$   we have $\ca_{00}=\emptyset$.  



%

\noindent\emph{Step 3: A further decomposition.}  
Recall that $\al_{k} = \langle \mathbf{1}_{[0,t_{k}]}, \be_{k} \rangle_{\ch}$. By Lemma \ref{e.alk} we have $\al_{k} = -\frac12 \si_{k}^{2}+\frac12 (R_{0t_{k+1}}^{0t_{k+1}} - R_{0t_{k}}^{0t_{k}} )$.  
Applying the binomial expansion 
 to $\al_{k}^{j}$ in  
\eqref{e.cmalt} gives:
\begin{eqnarray}
 \cm(L, \tau, i,q,j) =  \sum_{j'=0}^{j}  \cm(L, \tau, i,q,j, j') , 
\label{e.cmjp}
\end{eqnarray}
where  
\begin{eqnarray}
\cm(L, \tau, i,q,j, j') &=& \sum_{k=0}^{n-1} \delta^{ L}\big( f^{(L+2\tau-1)}
(t_{k}, x_{t_{k}})  
\be_{k}^{\otimes L}
\big)   
 \si_{k}^{ 2(q+j')}  
 \notag
\\
&& \qquad\cdot \frac{1}{2^{q}q! j!L!}
\cdot   {j\choose j'} (-\frac12 )^{j'}    \cdot (\frac12)^{j-j'} (R_{0t_{k+1}}^{0t_{k+1}} - R_{0t_{k}}^{0t_{k}} )^{j-j'}   . 
\label{e.cmcajp}
\end{eqnarray}

In the following steps we consider $\cm(L, \tau, i,q,j, j') $ in six cases:
 
\noindent(i)\quad $ j-j'\geq2$;

\noindent(ii)\quad $ j-j'=1 $, $L+\tau >1$;

\noindent(iii)\quad  $ j=j'$, $L+\tau>\rho$;

\noindent(iv)\quad $j-j'=1$, $L+\tau=1$;

\noindent(v)\quad $j=j'$, $L+\tau \leq \rho$, $\tau >0$;

\noindent(vi)\quad $j=j'$, $L  \leq \rho$, $\tau =0$. 

\noindent
Note that 
it is clear that the above six cases are mutually exclusive, and that their union covers all possible values of $(L, \tau, i,q,j, j') $. So from \eqref{e.jfdxn} and \eqref{e.cmjp} we have
\begin{eqnarray}
\cj (f, dx, \cp )   = \lp \sum_\text{case (i)}+\cdots + \sum_\text{case (vi)} \rp \cm(L, \tau, i,q,j, j') . 
\label{e.caseall}
\end{eqnarray}


\noindent\emph{Step 4: Case (i) and (ii) - a preliminary estimate.}  
By   computing the second moment of  \eqref{e.cmcajp} we get the  following estimate: 
\begin{eqnarray}
\mE[|\cm(L, \tau, i,q,j, j')|^{2}]&\lesssim& 
 \sum_{k,k'=0}^{n-1}\Big|\mE\Big[ \delta^{ L}\big( f^{(L+2\tau-1)}
(t_{k}, x_{t_{k}})  
\be_{k}^{\otimes L}
\big)\cdot \delta^{ L}\big( f^{(L+2\tau-1)}
(t_{k'}, x_{t_{k'}})  
\be_{k'}^{\otimes L}
\big) \Big]\Big|   
\notag
\\
&&\quad\cdot  \si_{k}^{ 2(q+j')}   \si_{k'}^{ 2(q+j')}   \cdot
\Big|R_{0t_{k+1}}^{0t_{k+1}} - R_{0t_{k}}^{0t_{k}} \Big|^{j-j'}\cdot
\Big|R_{0t_{k'+1}}^{0t_{k'+1}} - R_{0t_{k'}}^{0t_{k'}} \Big|^{j-j'}.
\label{e.eml2}
\end{eqnarray}
Now by applying Lemma \ref{lemma.dfdg} to the expected value in the right-hand side of \eqref{e.eml2}  we obtain: 
\begin{eqnarray}
\mE\big[\big|\cm(L, \tau, i,q,j, j')\big|^{2}\big] &\leq&\sum_{a=0}^{L} \sum_{k,k'=0}^{n-1}\Big|\langle \be_{k}, \be_{k'} \rangle_{\ch}^{a}\cdot
 \langle \be_{k'},\mathbf{1}_{[0,t_{k}]} \rangle_{\ch}^{L-a}\cdot
  \langle \be_{k}, \mathbf{1}_{[0,t_{k'}]}  \rangle_{\ch}^{L-a}\Big|   
\notag
\\
&&\cdot  \si_{k}^{ 2(q+j')}   \si_{k'}^{ 2(q+j')}   \cdot
\Big|R_{0t_{k+1}}^{0t_{k+1}} - R_{0t_{k}}^{0t_{k}} \Big|^{j-j'}\cdot
\Big|R_{0t_{k'+1}}^{0t_{k'+1}} - R_{0t_{k'}}^{0t_{k'}} \Big|^{j-j'} . 
\qquad
\label{e.eml2i}
\end{eqnarray}
Let $p_{i}$, $i=1,2,3$ be constants such that 
\begin{eqnarray}
 p_{i}>1,\quad i=1,2,3
 \qquad
 \text{and}
 \qquad
 \frac{1}{p_{1}}+\frac{1}{p_{2}}+\frac{1}{p_{3}}=1.
\label{e.pi}
\end{eqnarray}
 By applying   H\"older's inequality to the right-hand side of \eqref{e.eml2i} we obtain:
\begin{eqnarray}
 \mE[|\cm(L, \tau, i,q,j, j')|^{2}]  
 \leq  \sum_{a=0}^{L}  J_{1}\cdot  J_{2}\cdot  J_{3}\,, 
\label{e.cml2bd1} 
\end{eqnarray}
where
\begin{eqnarray}
J_{1} &=& \Big( \sum_{k,k'=0}^{n-1}\big|\langle \be_{k}, \be_{k'} \rangle_{\ch}\big|^{ap_{1}}\Big)^{1/p_{1}}
\label{e.j1j2}
\\
J_{2} &=&  \Big(\sum_{k,k'=0}^{n-1} 
 \big|\langle \be_{k'},\mathbf{1}_{[0,t_{k}]} \rangle_{\ch} \cdot
  \langle \be_{k}, \mathbf{1}_{[0,t_{k'}]}  \rangle_{\ch}\big|^{(L-a)p_{2}}   \cdot  (\si_{k}    \si_{k'})^{ 2(q+j')p_{2}} \Big)^{1/p_{2}}
\label{e.j1j21}
\\
J_{3} &=& \Big(\sum_{k,k'=0}^{n-1}  \big|R_{0t_{k+1}}^{0t_{k+1}} - R_{0t_{k}}^{0t_{k}} \big|^{(j-j')p_{3}}\cdot 
\big|R_{0t_{k'+1}}^{0t_{k'+1}} - R_{0t_{k'}}^{0t_{k'}} \big|^{(j-j')p_{3}} \Big)^{1/p_{3}}. 
\notag
\end{eqnarray}


 In the following we specify our choices of $p_{i}$'s. Recall that by Hypothesis \ref{hyp.1} together with Remark \ref{remark.rho1}-\ref{remark.rho2} we can always find $\rho'<2$ such that $\frac{1}{\rho}+\frac{1}{\rho'}>1$.  
  We first show that in both cases (i) and   (ii), 
we have the following relation:  
\begin{eqnarray}
\frac{a}{\rho} +\frac{L-a+q+j'}{\rho} + \frac{ j-j'}{\rho'} >1    . 
\label{e.ineq}
\end{eqnarray}
We rewrite \eqref{e.ineq} as:
\begin{eqnarray}
 {(L+q+j)}  + ( j-j')(\frac{\rho}{\rho'}-1) >\rho . 
\label{e.ineq1}
\end{eqnarray}

In case (i), since $j-j'\geq 2$ we have $L+q+j\geq 2$ and thus
\begin{eqnarray}
{(L+q+j)}  + ( j-j')(\frac{\rho}{\rho'}-1)  \geq 2+2(\frac{\rho}{\rho'}-1)  =2\frac{\rho}{\rho'}. 
\notag
\end{eqnarray}
Since $\rho'<2$ (see Remark \ref{remark.rho2}) relation \eqref{e.ineq1} and thus \eqref{e.ineq} follows. 

We turn to case (ii). In this case we have $L+q+j=L+\tau\geq 2$ and $j-j'=1$. It follows that  
\begin{eqnarray}
{(L+q+j)}  + ( j-j')(\frac{\rho}{\rho'}-1)  \geq 2+ \frac{\rho}{\rho'}-1=1+\frac{\rho}{\rho'}. 
\notag
\end{eqnarray}
Since $\frac{1}{\rho}+\frac{1}{\rho'}>1$, we obtain that the above is strictly greater than $\rho$. We again have relation \eqref{e.ineq1} and thus \eqref{e.ineq}. In summary, equation  \eqref{e.ineq} holds for both case (i) and (ii).  

 Relation \eqref{e.ineq} implies that we can find $p_{i}$, $i=1,2,3$ such that relations in \eqref{e.pi} holds, and we have  
\begin{eqnarray}
\frac{a}{\rho}>\frac{1}{p_{1}} , 
\qquad
\frac{L-a+q+j'}{\rho} >\frac{1}{p_{2}} ,
\qquad \frac{ j-j'}{\rho'} >\frac{1}{p_{3}} , 
\notag
\end{eqnarray}
or equivalently, 
\begin{eqnarray}
ap_{1}>\rho , 
\qquad
(L-a+q+j')p_{2} >\rho, 
\qquad
(j-j')p_{3}>\rho'. 
\label{e.p2}
\end{eqnarray}

\noindent\emph{Step 5: Case (i) and (ii) - convergence of $\cm (L, \tau, i,q,j, j')$.} 
Since  $ap_{1}>\rho$  we have:
\begin{eqnarray}
J_{1}\leq \Big(\max_{k,k'}\big|\langle \be_{k}, \be_{k'} \rangle_{\ch}\big|^{ap_{1}-\rho} \cdot \sum_{k,k'=0}^{n-1} |\langle \be_{k}, \be_{k'} \rangle_{\ch}|^{\rho}\Big)^{1/p_{1}} . 
\label{e.j1bd}
\end{eqnarray}
Note that $\langle \be_{k}, \be_{k'} \rangle_{\ch}=R_{t_{k}t_{k+1}}^{t_{k'}t_{k'+1}}$. Substituting this into \eqref{e.j1bd}, and applying relation \eqref{e.control3} in Remark \ref{remark.control} and then  the additivity of the 2D control $\omega$ we   obtain:
\begin{eqnarray}
J_{1}\leq \Big(\max_{k,k'}\big|R_{t_{k}t_{k+1}}^{t_{k'}t_{k'+1}}\big|^{ap_{1}-\rho} \cdot  \omega([0,T]^{2})\Big)^{1/p_{1}} . 
\label{e.j1c}
\end{eqnarray}
 By uniform continuity of $R$ we thus have $J_{1}\to0$ as $|\cp|\to0$.  

We turn to the estimate of $J_{2}$. By definition of $\rho$-variation of $R$ together with relation \eqref{e.control3} we have
\begin{eqnarray}
|\langle \be_{k},\mathbf{1}_{[0,t_{k'}]} \rangle_{\ch}| \leq \omega([t_{k}, t_{k+1}]\times[0,T])^{1/\rho}, 
\notag\\
\si_{k}^{2}= |\langle \be_{k},\be_{k} \rangle_{\ch}| \leq 
\omega([t_{k}, t_{k+1}]\times[0,T])^{1/\rho}
 .  
\label{e.be1bd}
\end{eqnarray}
 Applying \eqref{e.be1bd}   to $J_{2}$ we obtain:
 \begin{eqnarray}
J_{2} \leq \Big(\sum_{k,k'=0}^{n-1}  \omega([t_{k}, t_{k+1}]\times[0,T])^{(L-a+q+j')p_{2}/\rho}\cdot  \omega([t_{k'}, t_{k'+1}]\times[0,T])^{(L-a+q+j')p_{2}/\rho}
\Big)^{1/p_{2}}
\notag
\\
=\Big(\sum_{k=0}^{n-1}   \omega([t_{k}, t_{k+1}]\times[0,T])^{(L-a+q+j')p_{2}/\rho} 
\Big)^{2/p_{2}}. 
\notag
\end{eqnarray}
Recall that from \eqref{e.p2} we have $(L-a+q+j')p_{2}>\rho$. As in \eqref{e.j1c}    we can thus show that $J_{2}\to 0$ as $|\cp|\to0$. 
A   similar argument together with the fact that $R_{0t}^{0t}$, $t\in [0,T]$ is of finite $\rho'$-variation (see Remark \ref{remark.rho1}) shows that we also have $J_{3}\to0$ as $|\cp|\to0$. 

Recall the estimate \eqref{e.cml2bd1}. So the convergences of $J_{i}$, $i=1,2,3$ imply that \begin{eqnarray}
\mE[|\cm(L, \tau, i,q,j, j')|^{2}] \to 0   
\label{e.cmconv}
\end{eqnarray}
as $|\cp|\to0$
 in both cases (i) and (ii).

\noindent\emph{Step 6: Case (iii).} 
As for cases (i) and (ii), it can be shown that  
 there exist constants $p_{1}, p_{2}>1$  such that  
\begin{eqnarray}
\frac{1}{p_{1}}+\frac{1}{p_{2}} =1 , 
\qquad
ap_{1}>\rho , 
\qquad
(L-a+q+j')p_{2} >\rho. 
\label{e.p1p2}
\end{eqnarray}
Note that since $j=j'$ we have $(R_{0t_{k+1}}^{0t_{k+1}} - R_{0t_{k}}^{0t_{k}})^{j-j'}=1 $ in  
\eqref{e.cmcajp}. 
Applying H\"older inequality to \eqref{e.eml2i}   we have 
\begin{eqnarray}
\mE[|\cm(L, \tau, i,q,j, j')|^{2}]  
 \leq  \sum_{a=0}^{L}  J_{1}\cdot  J_{2}\,, 
\notag
\end{eqnarray}
where $J_{1}$ and $J_{2}$ are given as in \eqref{e.j1j2}-\eqref{e.j1j21} with $p_{1}$ and $p_{2}$ defined in \eqref{e.p1p2}. A similar argument as in the previous step then shows that the convergence \eqref{e.cmconv} also holds in case~(iii).

In summary of the convergence of $\cm(L, \tau, i,q,j, j')$ for case (i)-(iii) obtained in Step 5-6, we have shown that
\begin{eqnarray}
 \lim_{|\cp|\to0}\lp \sum_\text{case (i)}+  \sum_\text{case (ii)}+  \sum_\text{case (iii)} \rp \cm(L, \tau, i,q,j, j')=0.
\label{e.caseiii}
\end{eqnarray}





\noindent\emph{Step 7: Case (iv).} 
In this case   we   have $j=j'+1\geq 1$. This 
implies that $j=1$ and thus $L=q=j'=0$ and $i=\tau=1$, and so 
\begin{eqnarray}
\sum_\text{case (iv)}\cm(L, \tau, i,q,j, j') = \cm(0, 1, 1,0,1, 0) = \sum_{k=0}^{n-1}f^{(1)}(t_{k}, x_{t_{k}}) \cdot \frac12 (R_{0t_{k+1}}^{0t_{k+1}} - R_{0t_{k}}^{0t_{k}} ) 
\notag
\\
 = \frac12 \cj  (f^{(1)}, dR, \cp)    . 
\label{e.youngf1}
\end{eqnarray}

\noindent\emph{Step 8: Case (v).} In this case
  \eqref{e.cmcajp}   becomes 
\begin{eqnarray}
 \cm(L, \tau, i,q,j, j) =      \sum_{k=0}^{n-1}\delta^{ L}\big( f^{(L+2\tau-1)}
(t_{k}, x_{t_{k}})  
\be_{k}^{\otimes L}
\big)\cdot  \si_{k}^{2\tau}
 \cdot  \frac{1}{2^{q}q!j!L!}\cdot(-\frac12 )^{j} 
 ,
\label{e.cmjj}
\end{eqnarray}
and  we have $L+\tau\leq[\rho]$. So  $L+2\tau\leq 2L+2\tau \leq 2[\rho]$. Recall also that    $\ell= [2\rho]$. It follows that we have $L+2\tau\leq \ell$.
This implies that     the relation~\eqref{e.calt} holds. Summing up \eqref{e.cmjj} for $(i,q,j)\in \ca_{L\tau}$ and taking into account \eqref{e.calt} we thus have:  
\begin{eqnarray}
\sum_{(i,q,j)\in \ca_{L\tau}} \cm(L, \tau, i,q,j,j ) =\sum_{k=0}^{n-1}\delta^{ L}\big( f^{(L+2\tau-1)}
(t_{k}, x_{t_{k}})  
\be_{k}^{\otimes L}
\big)\cdot \si_{k}^{ 2\tau }\frac{1}{L!}   
\notag
\\
\cdot 
\sum_{(q,j):0\leq q+j\leq\tau} \frac{1}{2^{q}} \frac{1}{q!j!}\cdot(-\frac12 )^{j} 
 . 
\label{e.cmcalt}
\end{eqnarray}
Note that the second summation in \eqref{e.cmcalt} is equal to $(\frac12 -\frac12)^{\tau} \cdot \frac{1}{\tau!} $. Since $\tau>0$ in case (v),  we obtain that \eqref{e.cmcalt} equals 0.  It follows that   
\begin{eqnarray}
\sum_\text{case (v)}\cm(L, \tau, i,q,j, j') =  \sum_{  L+\tau\leq \rho, \tau>0}\,\sum_{(i,q,j)\in \ca_{L\tau}} 
\cm(L, \tau, i,q,j, j) = 0 . 
\label{e.cmi2}
\end{eqnarray}

\noindent\emph{Step 9: Conclusion.}  
In case (vi) it is easy to see that we have $L=1,\dots,[\rho]$, $\tau=q=j=j'=0$ and $i=L $. Precisely, we have 
\begin{eqnarray}
\text{case (vi)} = \{ ({L,0,L,0,0, 0}):    L=1,\dots, [\rho] \}. 
\label{e.casevi}
\end{eqnarray}  

Applying   \eqref{e.caseiii}, \eqref{e.youngf1}, \eqref{e.cmi2} and \eqref{e.casevi} to \eqref{e.caseall}  we obtain:   
\begin{eqnarray}
\lim_{n\to\infty}\cj (f, dx, \cp )  = \lim_{n\to\infty} \sum_{(L,\tau)\in\ci }\sum_{(i,q,j)\in\ca_{L\tau}}\sum_{j'=0}^{j}\cm({L,\tau,i,q,j, j'}) 
\notag
 \\
 = \lim_{|\cp|\to0}\lp\sum_{L=1}^{[\rho]}  \cm({L,0,L,0,0, 0}) +  \frac12 \cj  (f^{(1)}, dR, \cp) \rp. 
\label{e.cjfdx}
\end{eqnarray}

From \eqref{e.cmcajp} we have
\begin{eqnarray}
\sum_{L=1}^{[\rho]}\cm({L,0,L,0,0, 0})  = \sum_{L=1}^{[\rho]} \sum_{k=0}^{n-1}  \frac{1}{L!}\delta^{L}(f^{(L-1)}(t_{k},x_{t_{k}}) \be_{k}^{\otimes L} ) =  \cj  (f, \delta x, \cp)   . 
\label{e.cmzs1}
\end{eqnarray} 
Substituting \eqref{e.cmzs1} into \eqref{e.cjfdx} we obtain that relation \eqref{e.conv} holds. 

When $f^{(1)}$ is Lipschitz in $(t,x)$,  the fact that $x$ is of finite $p$-variation for $p>2\rho$ (see Remark \ref{remark.p}) implies that  $f^{(1)}(t,x_{t})$ is also of finite $p$-variation.  So  if $\rho<3/2$ or $\frac{1}{2\rho}+\frac{1}{\rho'}>1$ we have  that 
   $\cj  (f^{(1)}, dR, \cp) $ converges to the Young integral $\int_{0}^{T}f^{(1)}(t,x_{t})dR(t,t)$ as $|\cp|\to0$ (see Remark \ref{remark.young}). Applying this convergence to \eqref{e.conv} we obtain that relation \eqref{e.dxdelta} holds. 
The proof is now complete.
\end{proof}

 \section{ Multidimensional Skorohod-Stratonovich conversion formula}\label{section.md}
 
 In this section, we consider the Skorohod-Stratonovich conversion formula for a multidimensional Gaussian process.   In Section \ref{subsection.strat} we introduce a Stratonovich-type integral for continuous paths of finite $p$-variations for $p\geq 1$. Then in Section \ref{subsection.skorohod} we introduce the Skorohod-Riemann integral and prove Theorem \ref{thm.intro}.

\subsection{Stratonovich-type integral}\label{subsection.strat}
In the following  we introduce a Stratonovich-type integral.   For   a vector $i$ in $\bar{\NN}^{d}$  we   denote by  $i_{l}$ its $l$th component, and so we write $ i=(i_{1},\dots,i_{d})$. 
 For a real-valued function $ f (t,x)$, $t\in[0,T]$, $x=(x_{1},\dots, x_{d})\in\RR^{d}$ we denote the partial differential operators:
\begin{eqnarray}
\partial_{0} = \frac{\partial}{\partial t},
\qquad
\partial_{l} = \frac{\partial}{\partial x_{l}  }, \qquad l=1,\dots, d.
\notag
\end{eqnarray}

  \begin{Def}
  Let $x_{t}=(x_{1}(t),\dots, x_{d}(t)) $, $t\in[0,T]$ be a continuous path with values in $   \RR^{d}$, and denote   $x^{1,l}_{st}:=x_{l}(t)-x_{l}(s)$, $l=1,\dots, d$ and  $x^{1}_{st}:= x_{t}-x_{s}   $ for $s,t\in[0,T]$. Let $f$ be a continuous function on $[0,T]\times \RR^{d}$. Let $\cp$ be a partition of $[0,T]$: $0=t_{0}<\cdots<t_{n}=T$.   
Consider the compensated Riemann sum:
\begin{eqnarray}
\cj(\partial f, dx, \cp):=
\sum_{k=0}^{n-1} \sum_{m=1}^{\ell} \frac{1}{m!} \partial^{m}f(t_{k},x_{t_{k}}) (x_{t_{k}t_{k+1}}^{1})^{\otimes m}  ,  
\label{e.riemann}
\end{eqnarray}
where    $\ell\in\NN$ and we denote 
\begin{eqnarray}
\partial^{m}f(t_{k},x_{t_{k}}) (x_{t_{k}t_{k+1}}^{1})^{\otimes m} 
=
\sum_{j_{1},\dots, j_{m}=1}^{d}  \partial_{j_{1}}\cdots\partial_{j_{m}}f(t_{k},x_{t_{k}}) x^{1,j_{1}}_{t_{k}t_{k+1}}\cdots x^{1,j_{m}}_{t_{k}t_{k+1}} .
\label{e.pfx}
\end{eqnarray}
We define the  Stratonovich-type  integral:
\begin{eqnarray}
\int_{0}^{T} \partial f(t,x_{t}) dx_{t} := \lim_{|\cp|\to0} \cj(\partial f, dx, \cp), 
\label{e.riemannl}
\end{eqnarray}
given that the right-hand side of \eqref{e.riemann} is well-defined and that the  limit in \eqref{e.riemannl} exists.  

\end{Def}

\begin{remark}\label{remark.rough}
Let $(\bfx^{1}_{st},\dots, \bfx^{\ell}_{st})$, $s,t\in[0,T]$ be a geometric rough path of finite $p$-variation such that $\bfx^{1}_{st}=x_{t}-x_{s}$ (see e.g. \cite{FH20, FV10, lyons1998differential}).  
By some algebraic computations it can be shown that  
\begin{eqnarray}
\partial^{m}f(t_{k},x_{t_{k}})\cdot \bfx^{m}_{t_{k}t_{k+1}} = \frac{1}{m!}  \partial^{m}f(t_{k},x_{t_{k}})\cdot (x_{t_{k}t_{k+1}}^{1})^{\otimes m} , \qquad m=1,\dots, d,
\notag
\end{eqnarray}
where      the products in  both sides of \eqref{e.crsg} are     dot products. 
Recall that 
 the rough integral $\int_{0}^{T}\partial f (t,x_{t})d\bfx_{t}$ can be defined as: 
   \begin{eqnarray}
\int_{0}^{T}\partial f (t,x_{t})d\bfx_{t} = \lim_{|\cp|\to0}  \sum_{k=0}^{n-1} \sum_{m=1}^{[p]}   \partial^{m}f(t_{k},x_{t_{k}})\cdot \bfx^{m}_{t_{k}t_{k+1}}. 
\label{e.crsg}
\end{eqnarray}
 It follows that the rough integral $\int_{0}^{T}\partial f (t,x_{t})d\bfx_{t}$ coincides with the Stratonovich-type integral defined in \eqref{e.riemannl} given that both are well-defined. 
  \end{remark}

\begin{remark}\label{remark.jpfi}
For a vector $ i\in\bar{\NN}^{d}$ we   denote  $|i|= i_{1}+\cdots+i_{d}$ and  
\begin{eqnarray}
 \partial^{i}  =:\partial_{1}^{i_{1}}\cdots \partial_{d}^{i_{d}} =: 
\frac{\partial^{|i|}}{\partial x_{1}^{i_{1}}\cdots\partial x_{d}^{i_{d}}} . 
\notag
\end{eqnarray} 
From \eqref{e.riemann} it is easy to see that we have  
 \begin{eqnarray}
\cj (\partial f, dx , \cp ) =
\sum_{k=0}^{n-1}    \sum_{i\in  \bar{\NN}^{d }: 1\leq |i|\leq \ell}  
 \partial^{i}  f (t_{k}, x_{t_{k}})\cdot \lc \prod_{l=1}^{d}\frac{1}{i_{l}!} (x^{1,l}_{t_{k}t_{k+1}})^{i_{l}} \rc.
\label{e.rsstrat}
\end{eqnarray} 
\end{remark}
%

The following result shows that the  Stratonovich-type integral in \eqref{e.riemannl} is well-defined  given  proper regularity conditions of  $f$.

\begin{lemma}\label{lemma.strat}
Let $x_{t} $, $t\in[0,T]$ be a continuous path with values in $\RR^{d}$ such that it has finite $p$-variation for $p\geq 1$. Denote $\ell=[p]$. 
Let $f:(t,x)\in [0,T]\times \RR^{d} \to f(t,x)\in \RR$ be a continuous function such that it has bounded derivatives in $x$ up to order  $(\ell+1)$ and continuous first-order derivative in $t$.    Then the convergence \eqref{e.riemannl} holds, and we have the relation:
\begin{eqnarray}
\int_{0}^{T}\partial f(t,x_{t})d  x_{t}    = f(T,x_{T}) - f(0,x_{0}) - \int_{0}^{T}\partial_{0}f(u,x_{u})du.  
\label{e.roughint}
\end{eqnarray}

\end{lemma}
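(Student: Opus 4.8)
The plan is to establish \eqref{e.roughint} directly; since its right-hand side does not depend on the partition, this simultaneously yields the existence of the limit in \eqref{e.riemannl}. I would begin from the telescoping identity
\begin{eqnarray}
f(T,x_{T}) - f(0,x_{0}) = \sum_{k=0}^{n-1}\big( f(t_{k+1},x_{t_{k+1}}) - f(t_{k},x_{t_{k}}) \big),
\notag
\end{eqnarray}
and split each summand as
\begin{eqnarray}
f(t_{k+1},x_{t_{k+1}}) - f(t_{k},x_{t_{k}}) = \big( f(t_{k+1},x_{t_{k+1}}) - f(t_{k},x_{t_{k+1}}) \big) + \big( f(t_{k},x_{t_{k+1}}) - f(t_{k},x_{t_{k}}) \big),
\notag
\end{eqnarray}
i.e.\ a pure time increment at the frozen space point $x_{t_{k+1}}$, followed by a pure space increment at the frozen time $t_{k}$. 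The strategy is to show that the time increments sum to a Riemann sum of $\int_{0}^{T}\partial_{0}f(u,x_{u})\,du$, that the leading Taylor term of the space increments is exactly $\cj(\partial f,dx,\cp)$, and that all remainders vanish as $|\cp|\to 0$.

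For the time part, I would use
\begin{eqnarray}
f(t_{k+1},x_{t_{k+1}}) - f(t_{k},x_{t_{k+1}}) = \int_{t_{k}}^{t_{k+1}} \partial_{0}f(u,x_{t_{k+1}})\,du .
\notag
\end{eqnarray}
Because $x$ is continuous on $[0,T]$, its range $x([0,T])$ is compact, so $\partial_{0}f$ is bounded and uniformly continuous on $[0,T]\times x([0,T])$; moreover $u\mapsto\partial_{0}f(u,x_{u})$ is continuous, hence Riemann integrable. Comparing $\partial_{0}f(u,x_{t_{k+1}})$ with $\partial_{0}f(u,x_{u})$ for $u\in[t_{k},t_{k+1}]$ and invoking the uniform continuity of $\partial_{0}f$ and of $x$, one obtains
\begin{eqnarray}
\sum_{k=0}^{n-1}\big( f(t_{k+1},x_{t_{k+1}}) - f(t_{k},x_{t_{k+1}}) \big) \longrightarrow \int_{0}^{T}\partial_{0}f(u,x_{u})\,du \qquad \text{as } |\cp|\to0 .
\notag
\end{eqnarray}

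For the space part, I would apply Taylor's formula in the $x$-variable around $x_{t_{k}}$, expanding $g:=f(t_{k},\cdot)$ to order $\ell+1$:
\begin{eqnarray}
f(t_{k},x_{t_{k+1}}) - f(t_{k},x_{t_{k}}) = \sum_{m=1}^{\ell}\frac{1}{m!}\,\partial^{m}f(t_{k},x_{t_{k}})\,(x^{1}_{t_{k}t_{k+1}})^{\otimes m} + R_{k} ,
\notag
\end{eqnarray}
with remainder bounded by $|R_{k}|\lesssim \|\partial^{\ell+1}f\|_{\infty}\,|x^{1}_{t_{k}t_{k+1}}|^{\ell+1}$, the constant depending only on $d$ and $\ell$. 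Summing the leading terms over $k$ reproduces $\cj(\partial f,dx,\cp)$ by the definition \eqref{e.riemann} (equivalently \eqref{e.rsstrat}). Since $\ell+1=[p]+1>p$ and $x$ has finite $p$-variation, I would estimate
\begin{eqnarray}
\sum_{k=0}^{n-1}|R_{k}| \lesssim \Big(\max_{0\leq k\leq n-1}|x^{1}_{t_{k}t_{k+1}}|\Big)^{\ell+1-p}\sum_{k=0}^{n-1}|x^{1}_{t_{k}t_{k+1}}|^{p} \leq \Big(\max_{0\leq k\leq n-1}|x^{1}_{t_{k}t_{k+1}}|\Big)^{\ell+1-p}\,\|x\|_{p\text{-var},[0,T]}^{p} ,
\notag
\end{eqnarray}
which tends to $0$ as $|\cp|\to0$ by uniform continuity of $x$. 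Combining the three contributions gives
\begin{eqnarray}
f(T,x_{T}) - f(0,x_{0}) = \lim_{|\cp|\to0}\cj(\partial f,dx,\cp) + \int_{0}^{T}\partial_{0}f(u,x_{u})\,du ,
\notag
\end{eqnarray}
which is \eqref{e.roughint}, and in particular the limit in \eqref{e.riemannl} exists.

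The only genuinely delicate point is the control of the Taylor remainder $\sum_{k}R_{k}$, and here it is elementary: the choice $\ell=[p]$, hence $\ell+1>p$, is exactly what makes the interpolation between $|x^{1}_{t_{k}t_{k+1}}|^{\ell+1}$ and $|x^{1}_{t_{k}t_{k+1}}|^{p}$ produce a vanishing factor, so no sewing lemma or Young-integral machinery is needed at this level. The remaining work is bookkeeping — matching the multi-index form \eqref{e.rsstrat} of $\cj(\partial f,dx,\cp)$ with the standard multivariate Taylor coefficients, and the routine convergence of the time Riemann sum to $\int_{0}^{T}\partial_{0}f(u,x_{u})\,du$.
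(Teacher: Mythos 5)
Your proposal is correct and follows essentially the same route as the paper: the identical time/space splitting of each increment, a Taylor expansion in the space variable to order $\ell+1$ with the remainder controlled by interpolating $|x^{1}_{t_{k}t_{k+1}}|^{\ell+1}$ against the finite $p$-variation (using $\ell+1=[p]+1>p$), and the time increments converging to $\int_{0}^{T}\partial_{0}f(u,x_{u})\,du$ as a Riemann sum. The only cosmetic difference is that you write the time increment via the fundamental theorem of calculus and uniform continuity, whereas the paper invokes the mean value theorem; these are interchangeable.
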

\begin{proof}
Take $s,t\in[0,T]$.  
Define the function 
$
\varphi(\la) = f(s , x_{s}+\la\cdot x^{1}_{st})$, 
 $ \la\in [0,1]. 
$ 
 By differentiating $\varphi$ for $m$ times we get:  
 \begin{eqnarray}
\varphi^{(m)}(\la) = \partial^{m} f (s , x_{s}+\la \cdot x^{1}_{st})\cdot ( x^{1}_{st})^{\otimes m},
\qquad 
m\in\NN,  
\label{e.phip}
\end{eqnarray}
where the right-hand side of \eqref{e.phip} should be interpreted as a dot product  similar to  \eqref{e.pfx}.
Applying the  Taylor expansion    we have
\begin{eqnarray}
\varphi(1)-\varphi(0) =\sum_{m=1}^{\ell}\frac{1}{m!} \varphi^{(m)}(0)    +\frac{1}{(\ell+1)!} \varphi^{(\ell+1)}(\theta)  ,
\label{e.ff}
\end{eqnarray}
where $\theta$ is a value in $[0,1]$. 
Note that the left-hand side of \eqref{e.ff} is just $f(s,x_{t})-f(s,x_{s})$. 
So relation \eqref{e.ff} together with \eqref{e.phip} gives:   
\begin{eqnarray}
f(t,x_{t}) - f(s,x_{s}) = [f(t,x_{t}) - f(s,x_{t})]+[f(s,x_{t}) - f(s,x_{s})]
\notag
\\
 = [f(t,x_{t}) - f(s,x_{t})]+ \sum_{m=1}^{\ell} \frac{1}{m!} \partial^{m}f(s,x_{s}) (x^{1}_{st})^{\otimes m}  
 \notag
 \\
 +\frac{1}{(\ell+1)!} \partial^{\ell+1}f(s,x_{s}+\theta\cdot x^{1}_{st}) (x^{1}_{st})^{\otimes (\ell+1)}. 
\label{e.ffst}
\end{eqnarray}
Consider a partition   $\cp$ of $[0,T]$: $0=t_{0}<\cdots<t_{n}=T$, and   write
\begin{eqnarray}
f(T,x_{T}) - f(0,x_{0}) = \sum_{k=0}^{n-1} [f(t_{k+1},x_{t_{k+1}}) - f(t_{k},x_{t_{k}})]. 
\label{e.ffot}
\end{eqnarray}
 Applying \eqref{e.ffst} to the right-hand side of \eqref{e.ffot} with $s=t_{k}$ and $t=t_{k+1}$ we get: 
 \begin{eqnarray}
f(T,x_{T}) - f(0,x_{0}) = \ca_{1}
+ \cj(\partial f, dx, \cp)
+ \ca_{2}\,,
\label{e.ffjj}
\end{eqnarray}
where
\begin{eqnarray}
\ca_{1} &=& \sum_{k=0}^{n-1} [f(t_{k+1},x_{t_{k+1}}) - f(t_{k},x_{t_{k+1}}) ] 
\notag
\\
\ca_{2}&=& \sum_{k=0}^{n-1} \frac{1}{(\ell+1)!} \partial^{\ell+1}f(t_{k},x_{t_{k}}+\theta_{k} \cdot x^{1}_{t_{k}t_{k+1}} ) (x^{1}_{t_{k}t_{k+1}}  )^{\otimes (\ell+1)}  ,
\notag
\end{eqnarray}
  $\theta_{k}$ is a value in $[0,1]$, and recall that $\cj(\partial f, dx, \cp)$ is defined in \eqref{e.riemann}. 
By mean value theorem  it is easy to see that $\ca_{1}$ is equal to a Riemann sum  of the integral $\int_{0}^{T}\partial_{0}f(t,x_{t})dt$. This implies that  
\begin{eqnarray}
\ca_{1}\to \int_{0}^{T}\partial_{0}f(t,x_{t})dt  
\label{e.j1cm}
\end{eqnarray}
 as $|\cp|\to 0$. 
On the other hand, since $\partial^{ \ell+1 }f $  is bounded  we have the estimate: 
\begin{eqnarray}
|\ca_{2}|\lesssim \sum_{k=0}^{n-1} |x^{1}_{t_{k}t_{k+1}}|^{\ell+1} . 
\notag
\end{eqnarray}
Invoking the assumption that  $x$ has finite $p$-variation and the fact that $\ell+1=[p]+1>p$, we can thus apply a similar argument as in \eqref{e.j1c} to show that  $\ca_{2}\to0$ as $|\cp|\to0$. 
Now, sending $|\cp|\to 0$ in \eqref{e.ffjj} and applying the convergences of $\ca_{1}$ in \eqref{e.j1cm} and $\ca_{2} $ we obtain: 
\begin{eqnarray}
f(T,x_{T}) - f(0,x_{0}) = \int_{0}^{T}\partial_{0}f(t,x_{t})dt 
+\lim_{|\cp|\to0} 
\cj(\partial f, dx, \cp).  
\notag
\end{eqnarray}
This shows that the limit of $\cj(\partial f, dx, \cp)$ exists, and it satisfies the relation \eqref{e.roughint}. The proof is now complete. 
\end{proof}



\subsection{Riemann-Skorohod integral and conversion formula}\label{subsection.skorohod}

Let $x_{l}$, $l=1,\dots,d$ be independent   Gaussian processes with covariance functions  $R_{l}(s,t)= \mE[x_{l}(s)x_{l}(t)]$. For each $x_{l}$ we can thus define the   Hilbert space  $\ch_{l}$ generated by $R_{l}$,  the Malliavin derivative operator $D_{l}$ and the divergence operators    $\delta_{l}$    as in Sections \ref{section.m}-\ref{section.cov}.  
In the following, we define the multidimensional version of the Skorohod-type integral:
\begin{Def}\label{def.skor}
Consider the following Skorohod-type Riemann sum: 
\begin{eqnarray}
\cj (\partial f, \delta x , \cp ): =
\sum_{k=0}^{n-1} \sum_{m=1}^{[\rho]} \frac{1}{m!}\sum_{j_{1},\dots, j_{m}=1}^{d} \delta_{j_{1}}\cdots\delta_{j_{m}}\Big( \partial_{j_{1}}\cdots\partial_{j_{m}}f(t_{k},x_{t_{k}}) \be_{k}^{\otimes m}
\Big) ,
\notag
\end{eqnarray}
where $\delta_{j_{1}}\cdots\delta_{j_{m}}$ is the divergence operator adjoint to the derivative operator  $D_{j_{1}}\cdots D_{j_{m}}$. 
As in Remark \ref{remark.jpfi}, we can show that  
\begin{eqnarray}
\cj (\partial f, \delta x , \cp ) =
\sum_{k=0}^{n-1}  
\,\sum_{i\in  \bar{\NN}^{d }:1\leq |i|\leq [\rho]}  
\lc\prod_{l=1}^{d}\frac{1}{i_{l}!} \rc\cdot 
\delta^{i} \Big( \partial^{i}  f (t_{k}, x_{t_{k}})\be_{k}^{\otimes |i|} \Big) ,
\notag
\end{eqnarray} 
where   $\delta^{i}  := \delta_{1}^{i_{1}}\cdots \delta_{d}^{i_{d}} $
   is the divergence operator adjoint to $ D^{i_{1}}_{1}\cdots D^{i_{d}}_{d}$. 
 We define the Skorohod-type integral as:
\begin{eqnarray}
\int_{0}^{T}\partial f(t,x_{t})dx_{t} := \lim_{|\cp|\to0} \cj (\partial f, \delta x , \cp ) , 
\label{e.skorl}
\end{eqnarray}
 given that $ \cj (\partial f, \delta x , \cp )$ is well-defined and that the limit exists in probability.   
\end{Def}

Following is our main result.  Recall that we denote $\partial^{2}_{ll} = \frac{\partial^{2}}{\partial x_{l}^{2}}$ and 
\begin{eqnarray}
\cj(\partial^{2}f, dR, \cp) := \sum_{k=0}^{n-1}  \sum_{l=1}^{d}  {\partial^{2}_{ll} f}  (t_{k},x_{t_{k}})\cdot \big(R_{l}(t_{k+1},t_{k+1}) - R_{l}(t_{k},t_{k}) \big).  
\label{e.fdr2}
\end{eqnarray}
     
\begin{theorem}\label{thm.main}
Let $ x $ be  a $\RR^{d}$-valued Gaussian process  satisfying Hypothesis \ref{hyp.1}. Let $\ell=[2\rho]$.  
Let  $f : (t,x)\in[0,T]\times \RR^{d}\to \RR$ be a continuous function with bounded derivatives in $x$ up to order $2\ell$ and continuous first-order derivative in $t$.   
Then the limit in  \eqref{e.riemannl} exists and we have the  relation:
\begin{eqnarray}
  \int_{0}^{T}\partial f(t,x_{t})d  x_{t}  =  \lim_{|\cp|\to0} \lp \cj(\partial f,\delta x, \cp) + \frac12 \cj(\partial^{2}f, dR, \cp) \rp. 
\label{e.pfdxpre}
\end{eqnarray}
Assume further that $\partial_{ll}^{2}f$, $l=1,\dots, d$ are Lipschitz in $(t,x)$, and that one of the following holds:

\noindent (1)  $\rho <3/2$; 
 
\noindent (2) $\frac{1}{2\rho}+\frac{1}{\rho'}>1$. 
 
\noindent Then   both  \eqref{e.skorl} and  \eqref{e.fdr2} converge as $|\cp|\to0$, and we have    the        conversion formula: 
 \begin{eqnarray}
  \int_{0}^{T}\partial f(t,x_{t})d  x_{t}  = \int_{0}^{T} \partial f(t,x_{t})\delta x_{t} +\frac12 \sum_{l=1}^{d}\int_{0}^{T} {\partial^{2}_{ll} f}  (t,x_{t})dR_{l}(t,t) , 
\label{e.dxdelta1}
\end{eqnarray}
where    $\int_{0}^{T} {\partial^{2}_{ll} f}  (t,x_{t})dR_{l}(t,t)$ should be interpreted as a Young integral.  
\end{theorem}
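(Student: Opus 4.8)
The strategy is to prove \eqref{e.pfdxpre} in full generality by mirroring the nine-step chaos-expansion argument in the proof of Theorem~\ref{thm.dxdelta}, and then to deduce the conversion formula \eqref{e.dxdelta1} under (1) or (2) exactly as in the one-dimensional case. First, the existence of the Stratonovich-type integral: by Remark~\ref{remark.p} the path $x$ has finite $p$-variation almost surely for every $p>2\rho$, and after the $\ep$-enlargement of Remark~\ref{remark.control} we may assume $[p]=[2\rho]=\ell$. Since $f$ has bounded spatial derivatives up to order $2\ell\geq\ell+1$ and continuous $\partial_0 f$, Lemma~\ref{lemma.strat} already gives the convergence \eqref{e.riemannl} together with the representation \eqref{e.roughint}. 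It therefore remains to identify $\lim_{|\cp|\to0}\cj(\partial f,dx,\cp)$ with the right-hand side of \eqref{e.pfdxpre}.

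\noindent\textbf{Multi-index chaos expansion.} By independence of the components, $\ch=\ch_1\oplus\cdots\oplus\ch_d$, the divergences $\delta_1,\dots,\delta_d$ commute, and $\langle\be_k^{(l)},\be_{k'}^{(l')}\rangle_{\ch}=0$ for $l\neq l'$, where $\be_k^{(l)}$ denotes the copy of $\mathbf 1_{[t_k,t_{k+1}]}$ in $\ch_l$. Starting from the multi-index form \eqref{e.rsstrat} of the compensated sum, I would apply the one-dimensional Hermite expansion \eqref{e.xhermite} to each factor $(x^{1,l}_{t_kt_{k+1}})^{i_l}$ separately; this rewrites the term indexed by $i\in\bar{\NN}^d$ as a sum over reduction multi-indices $q=(q_1,\dots,q_d)$ with $0\le 2q_l\le i_l$, involving $\delta_1^{i_1-2q_1}\cdots\delta_d^{i_d-2q_d}$ applied to tensor powers of the $\be_k^{(l)}$ and the quantities $\sigma_{k,l}:=\|x^{1,l}_{t_kt_{k+1}}\|_{L^2}$ to suitable powers. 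Pushing $\partial^i f(t_k,x_{t_k})$ through these divergences by a coordinatewise use of the $\ch_l$-analogue of Lemma~\ref{lemma.fdi} (which also shows $\partial^i f(t_k,x_{t_k})\in\DD^{m,2}$ for all $m$, so that $\cj(\partial f,\delta x,\cp)$ is well-defined) produces factors $\alpha_{k,l}:=\langle\mathbf 1_{[0,t_k]},\be_k\rangle_{\ch_l}$ and higher mixed partials $\partial^{i+j}f(t_k,x_{t_k})$, $j\in\bar{\NN}^d$; finally, Lemma~\ref{e.alk} applied in each $\ch_l$ splits every $\alpha_{k,l}^{j_l}$ binomially into a $\sigma_{k,l}^2$-piece and an $\bigl(R_l(t_{k+1},t_{k+1})-R_l(t_k,t_k)\bigr)$-piece. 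The resulting bookkeeping is organized by the evident multi-index versions of the index sets \eqref{e.aset}--\eqref{e.ci}, with the coefficient identity \eqref{e.aij} holding coordinatewise.

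\noindent\textbf{Estimates, cancellations, and the conversion formula.} The ``small'' terms --- those in which some coordinate carries $j_l-j_l'\ge1$, or the total chaos order exceeds $[\rho]$ --- are estimated in $L^2$ as in \eqref{e.eml2}--\eqref{e.eml2i} through the multidimensional analogue of Lemma~\ref{lemma.dfdg}; here the vanishing of all cross pairings $\langle\be_k^{(l)},\be_{k'}^{(l')}\rangle_{\ch}$ for $l\neq l'$ makes the second moment factor into a product over the $d$ coordinates, each factor being a one-dimensional sum of precisely the type bounded in Steps~4--6 of the proof of Theorem~\ref{thm.dxdelta} (using finite $\rho$-variation of $R_l$ and, by Remark~\ref{remark.rho1}, finite $\rho'$-variation of $R_l(t,t)$ with $\rho'<2$, cf. Remark~\ref{remark.rho2}); hence every such term tends to $0$ in $L^2$ as $|\cp|\to0$. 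For the ``diagonal'' terms (all $j_l=j_l'$, small total order) the cancellation of Step~8 applies in each coordinate, summing over the analogue of $\ca_{L\tau}$ to the factor $(\frac12-\frac12)^{\tau_l}/\tau_l!$, which vanishes whenever $\tau_l\ge1$; thus the only surviving diagonal contribution has exactly one coordinate $l$ with $\tau_l=1$, $L_l=0$ and all other coordinates trivial, and it equals $\frac12\,\cj(\partial^2 f,dR,\cp)$, while the surviving pure-chaos contribution (all $q=0$, all $j=0$, $1\le|i|\le[\rho]$) is exactly $\cj(\partial f,\delta x,\cp)$. This proves \eqref{e.pfdxpre}. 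Under (1) or (2), with each $\partial^2_{ll}f$ Lipschitz, the path $t\mapsto\partial^2_{ll}f(t,x_t)$ has finite $p$-variation for $p>2\rho$ (Remark~\ref{remark.p}) while $R_l(t,t)$ has finite $\rho'$-variation, and in both cases one may choose $p$ close enough to $2\rho$ that $\frac1p+\frac1{\rho'}>1$; by Remark~\ref{remark.young} the sum $\cj(\partial^2 f,dR,\cp)$ then converges to the Young integral $\sum_l\int_0^T\partial^2_{ll}f(t,x_t)\,dR_l(t,t)$. Combining this with \eqref{e.pfdxpre} and $\cj(\partial f,dx,\cp)\to\int_0^T\partial f(t,x_t)dx_t$ forces $\cj(\partial f,\delta x,\cp)$ to converge, and \eqref{e.dxdelta1} follows.

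\noindent\textbf{Main obstacle.} Once the product-over-coordinates factorization is in place, all the analytic estimates reduce to the one-dimensional ones already carried out. The real work is organizational: formulating and proving the multi-index analogues of Lemmas~\ref{lemma.fdi}--\ref{lemma.dfdg} and of the index sets of Step~2, and verifying carefully that among the diagonal terms only the one producing $\frac12\,\cj(\partial^2 f,dR,\cp)$ survives the coordinatewise cancellation of Step~8 --- in particular, that no mixed-partial term $\partial_l\partial_{l'}f$ with $l\neq l'$ contributes to the correction, which is exactly the point where the independence of the components of $x$ enters.
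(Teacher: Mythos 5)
Your overall strategy is exactly the paper's: rewrite the multi-index compensated sum \eqref{e.rsstrat} via coordinatewise Hermite expansion, push $\partial^i f$ through the divergences with the $\ch_l$-versions of Lemmas \ref{lemma.fdi}--\ref{lemma.dfdg}, split $\al_{k,l}$ by Lemma \ref{e.alk}, organize by multi-index analogues of the sets \eqref{e.aset}--\eqref{e.ci}, use independence to factor the $L^{2}$ estimates into one-dimensional ones, and finish with the Young-integral step under (1) or (2). This is how the paper argues, so the architecture is fine; the existence part via Lemma \ref{lemma.strat} and Remark \ref{remark.p} is also correct.

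However, your bookkeeping of which terms survive is wrong at the one point that actually produces the theorem's correction term, and as written the argument is internally inconsistent. You declare ``small'' every term in which some coordinate carries $j_l-j_l'\ge 1$; taken literally this would send the entire family of $R$-increment terms to zero and yield no correction at all. In the actual case analysis (cases (i)--(vi) of the proofs of Theorems \ref{thm.dxdelta} and \ref{thm.main}), only the terms with $|j-j'|\ge 2$, or with $|j-j'|=1$ \emph{and} $|L+\tau|>1$, or with $j=j'$ and $|L+\tau|>\rho$, are negligible; the term $\frac12\,\cj(\partial^{2}f,dR,\cp)$ is precisely the case $|j-j'|=1$, $|L+\tau|=1$ (so $q=j'=L=0$, $i=j=\tau=e_l$), i.e.\ the term carrying exactly one factor $R_l(t_{k+1},t_{k+1})-R_l(t_k,t_k)$ and no divergence operator, as in \eqref{e.cmcajp} and Step 7 of the one-dimensional proof. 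You instead attribute this correction to a ``diagonal'' term (all $j_l=j_l'$) with $\tau_l=1$, $L_l=0$ that ``survives the coordinatewise cancellation of Step~8''; but the cancellation you invoke, $\sum_{q_l+j_l=\tau_l}\frac{(-1)^{j_l}}{2^{j_l}2^{q_l}q_l!j_l!}=\frac{1}{\tau_l!}\bigl(\frac12-\frac12\bigr)^{\tau_l}=0$, kills \emph{every} $j=j'$ term with some $\tau_l\ge 1$, so no such term survives, and the only surviving $j=j'$ contribution is $\tau=0$, which is the Skorohod sum $\cj(\partial f,\delta x,\cp)$. You repeat the same misattribution in your closing paragraph. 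The fix is simply to adopt the correct trichotomy above; with it, the rest of your argument (factorized $L^{2}$ bounds using $\frac1\rho+\frac1{\rho'}>1$, and the Young-integral convergence of $\cj(\partial^{2}f,dR,\cp)$ under (1) or (2)) goes through and matches the paper.
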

\begin{proof} 
We start by defining the following index sets: 
\begin{eqnarray}
\ca &=& \{(i,q,j)\in\bar{\NN}^{3d}: 1\leq |i|\leq \ell,\,   2q  \leq i ,   \,   j  \leq i -2q \},  
\notag
\\
\ca_{L } &=& \{(i,q,j)\in \ca:   i-2q-j=L \}, 
\qquad \qquad\text{for}\qquad  L\in\bar{\NN}^{d}:  |L|\leq \ell, 
\notag
\\
\ca_{L\tau} &=& \{(i,q,j)\in \ca_{L}: q+j=\tau \} , 
\qquad \qquad\text{for}  \qquad \tau\in\bar{\NN}^{d}:   |\tau|\leq    \ell-|L|,
\notag
\\
\ci &=& \{(L, \tau)\in\bar{\NN}^{2d}:   |L|\leq \ell,   |\tau|\leq \ell-|L|\} .
\notag
\end{eqnarray}
In the above the inequality $2q\leq i$ means that $2q_{l}\leq i_{l}$ for all $l=1,\dots,d$. The other vector inequalities should be  interpreted in the similar way. As in \eqref{e.cal} it is easy to see that we have
\begin{eqnarray}
\ca_{L} = \{(i,q,j)\in \bar{\NN}^{3d}: 1\leq |i|\leq \ell,\,   i-2q-j=L \}, 
\notag
\end{eqnarray}
and therefore
  \begin{eqnarray}
\ca_{L\tau} = \{(i,q,j)\in \bar{\NN}^{3d}: 1\leq |i|\leq \ell,\,   i-2q-j=L , q+j=\tau\}.  
\notag
\end{eqnarray}
It follows that if $(L,\tau)$ is such that $|L+2\tau|\leq \ell$, then we have 
\begin{eqnarray}
\ca_{L\tau} = \Big\{(i,q,j) : (i_{l},q_{l},j_{l})\in\big\{(L_{l}+\tau_{l},0,\tau_{l}),\dots, (L_{l}+2\tau_{l},\tau_{l},0)\big\},l=1,\dots, d \Big\} . 
\label{e.caltm}
\end{eqnarray}

As in   Theorem \ref{thm.dxdelta} by some careful algebraic computations     we can show the following decomposition: 
 
\begin{eqnarray}
\cj (f, dx, \cp )  =\sum_{(L,\tau)\in\ci} \sum_{(i,q,j)\in\ca_{L\tau}}\sum_{0\leq j'\leq j} \cm(L,\tau,i,q,j,j') , 
\notag
\end{eqnarray}
where
\begin{eqnarray}
\cm(L,\tau,i,q,j,j')  = \sum_{k=0}^{n-1} \lc \prod_{l=1}^{d} \frac{1}{2^{q_{l}}q_{l}!j_{l}!L_{l}!} \rc\cdot   \delta^{L}  \Big( \partial^{L+2\tau}f (t_{k}, x_{t_{k}}) \cdot  \be_{k}^{\otimes |L|} \Big) 
 \notag\\\cdot
\lc \prod_{l=1}^{d}\si_{l,k}^{2(j_{l}'+q_{l})} \rc \cdot \lc \prod_{l=1}^{d} \frac{1}{2^{j_{l}}}
 {j_{l}\choose j_{l}'} (-1)^{j_{l}'}(R_{l}(t_{k+1},t_{k+1}) - R_{l}(t_{k},t_{k})  )^{j_{l}-j_{l}'}\rc,   
\label{e.cmm}
\end{eqnarray}
       $\si_{l,k}=\mE[(x^{1,l}_{t_{k}t_{k+1}})^{2}]^{1/2}$ and    $\delta^{L}:= \delta^{L_{1}}_{1}\cdots \delta^{L_{d}}_{d}$.
 
Now we   estimate $\cm(L,\tau,i,q,j,j') $ in various  cases as in Theorem \ref{thm.dxdelta}. Precisely, we consider the following six cases:

\noindent(i) \quad $ |j-j'|\geq2$;

\noindent(ii) \quad $ |j-j'|=1 $, $|L+\tau| >1$;

\noindent(iii) \quad $ j=j'$, $|L+\tau|>\rho$.

\noindent(iv) \quad $|j-j'|=1$, $|L+\tau|=1$;

\noindent(v) \quad $j=j'$, $|L+\tau| \leq \rho$, $|\tau| >0$;

\noindent(vi) \quad $j=j'$, $|L|  \leq \rho$, $\tau =0$.

Similar to the proof of Theorem \ref{thm.dxdelta}, given $\frac{1}{\rho}+\frac{1}{\rho'}>1$ it can be shown that 
\begin{eqnarray}
\cm(L,\tau,i,q,j,j') \to0 
\label{e.cmcasei}
\end{eqnarray}
 as $|\cp|\to0$ in cases (i)-(iii). For sake of conciseness,  we omit the proof  and leave it to the patient reader.

In case (v) we have $j-j'=0$ and thus $ j'+q=j+q=\tau$. Then from \eqref{e.cmm} we have   
\begin{eqnarray}
\sum_{(i,q,j)\in\ca_{L\tau}}
\cm(L,\tau,i,q,j,j) =  \sum_{(i,q,j)\in\ca_{L\tau}}\sum_{k=0}^{n-1}     \delta^{L } \Big( \partial^{L+2\tau}f (t_{k}, x_{t_{k}}) \cdot \be_{k}^{\otimes |L|} \Big)
 \notag\\
\cdot\lc \prod_{l=1}^{d}\frac{1}{L_{l}!}\si_{l,k}^{2\tau_{l}} \rc \cdot \lc \prod_{l=1}^{d}  
     \frac{(-1)^{j_{l}}}{2^{j_{l}}2^{q_{l}}q_{l}!j_{l}!}  \rc.
\end{eqnarray}
As in the proof of Theorem \ref{thm.dxdelta}, since $|L+\tau| \leq \rho$ we can show that $|L+2\tau|\leq [2\rho]= \ell$, and therefore   \eqref{e.caltm} holds. 
 So we can write  
\begin{eqnarray}
\sum_{(i,q,j)\in\ca_{L\tau}}
\cm(L,\tau,i,q,j,j) =   \sum_{k=0}^{n-1}   \delta^{L } \Big( \partial^{L+2\tau}f (t_{k}, x_{t_{k}}) \cdot \be_{k}^{\otimes |L|} \Big) 
\cdot\lc \prod_{l=1}^{d}\frac{1}{L_{l}!}\si_{l,k}^{2\tau_{l}} \rc\notag
\\
 \cdot  \prod_{l=1}^{d}  
  \lc  \sum_{q_{l}+j_{l}=\tau_{l}}  \frac{(-1)^{j_{l}}}{2^{j_{l}}2^{q_{l}}q_{l}!j_{l}!}  \rc  . 
\notag
\end{eqnarray}
Since $|\tau|>0$ the above shows that  in case (v) we have  
\begin{eqnarray}
\sum_{(i,q,j)\in\ca_{L\tau}}
\cm(L,\tau,i,q,j,j) =0. 
\label{e.cmcasev}
\end{eqnarray}

In case (iv) we must have $|j|=1 $, $q=j'=L=0$ and $j=\tau=i$. This implies that
\begin{eqnarray}
\text{case (iv)} =  \frac12 \cj(\partial^{2}f, dR, \cp). 
\label{e.youngr}
\end{eqnarray}

Finally, it is easy to verify that 
\begin{eqnarray}
\text{case (vi)} = \cj(\partial f, \delta x, \cp). 
\label{e.cmcaseiv}
\end{eqnarray}

In summary of the convergence \eqref{e.cmcasei} in case (i)-(iii), relation \eqref{e.cmcasev}, \eqref{e.youngr} and \eqref{e.cmcaseiv}  we conclude that \eqref{e.pfdxpre} holds. 

As in Theorem \ref{thm.dxdelta} we can show that if $\partial_{ll}^{2}f$, $l=1,\dots, d$ are Lipschitz in $(t,x)$ and  if $\rho <3/2$ or $\frac{1}{2\rho}+\frac{1}{\rho'}>1$, then $\cj(\partial^{2}f, dR, \cp)  $ converges to the Young integral $ \sum_{l=1}^{d}\int_{0}^{T} {\partial^{2}_{ll} f}  (t,x_{t})dR_{l}(t,t) $ as $|\cp|\to0$. 
 This implies that \eqref{e.dxdelta1} holds.  The proof is now complete. 
\end{proof}

\bibliographystyle{abbrv}
\bibliography{Skorohod-Riemann.bib} 


\end{document}